\newtheorem{theorem}{Theorem}
\theoremstyle{plain}
\newtheorem{corollary}{Corollary}
\newtheorem{lemma}{Lemma}
\newtheorem{proposition}{Proposition}
\numberwithin{equation}{section}
\newcommand{\bg}{\begin{gathered}}
\newcommand{\eg}{\end{gathered}}
\newcommand{\bea}{\begin{eqnarray}}
\newcommand{\eea}{\end{eqnarray}}
\begin{document}
\title[On the Mean Summability of \linebreak Series by Nonlinear Fourier Basis  ]{On the Mean Summability of \linebreak Series by Nonlinear Fourier Basis  }
\author{ Hatice  ASLAN \and Ali GUVEN}
\address{Department of Mathematics, Firat University, Faculty of Science, 23119, Elazig, Turkey}
\email{haticeaslan2017@gmail.com (Corresponding author)}
\address{Department of Mathematics
Faculty of Art and Science
Balikesir University
10145, Balikesir
Turkey  }
\email{aliguven@gmail.com}
\subjclass[2010]{41A25, 41A10, 41E30}
\keywords{Nonlinear Fourier basis, Partial sum, Ces\`{a}ro mean, Bernstein inequality }

\begin{abstract}
The nonlinear signal processing has achieved a rapid process in the recent years. A family of nonlinear Fourier bases, as a typical family
of mono-component signals, has been \linebreak constructed and applied to signal processing. In this paper, \linebreak the approximation properties of the
partial sums and Ces\`{a}ro \linebreak summability of  series by the nonlinear Fourier basis are \linebreak investigated in the $L^{p}(\mathbb{T})$.
Furthermore, these results are applied to the prove of  Bernstein's  inequalities for  nonlinear trigonometric polynomials.
\end{abstract}

\maketitle

%\author{Hatice Aslan}

\section{Introduction}

Let $0<p<\infty$ and $\mathbb{T}:= \mathbb{R} / 2 \pi \mathbb{Z}$. Let  $f$ is a  periodic function on $\mathbb{T}$,
then we denote $L^{p}(\mathbb{T})$ the set of Lebesgue measurable  functions $f:\mathbb{T} \rightarrow \mathbb{R}$ (or  $\mathbb{C})$) such that
\begin{equation*}
 \left(\frac{1}{2\pi}\int_{\mathbb{T}}\mid f(x) \mid^{p}dx\right)^{1/p} < \infty
\end{equation*}
where the integral is a Lebesgue integral, and we identify functions that differ on a a set of measure zero. We define $L^{p}$-norm of $f$ by
\begin{equation*}
\parallel f \parallel_{p}= \left(\frac{1}{2\pi} \int_{\mathbb{T}}\mid f(x) \mid^{p}dx\right)^{1/p}.
\end{equation*}
For $p=\infty$ the space $L^{\infty}(\mathbb{T})$ consists of the Lebesgue measurable \linebreak functions $f:\mathbb{T} \rightarrow \mathbb{R}$ (or $ \mathbb{C})$ that  are essentially bounded on $\mathbb{T}$,\linebreak meaning that $f$ is bounded on a subset of $\mathbb{T}$ whose complement has measure zero.  The norm on is essential supremum
\begin{equation*}
\parallel f\parallel_{\infty}= \inf\{ M \mid \quad  \mid f(x)\mid \leq M \quad a.e. \quad in \quad \mathbb{T} \}.
\end{equation*}
Note that $\parallel f \parallel_{p}$ may take the value $\infty$.\\

The another important concept is the modulus of smoothness is \linebreak defined by
\begin{equation*}
\omega \left (f;t \right)_{p}:= \sup_{0 \leq h \leq t}\parallel f(.+h)-f(.) \parallel_{p}
\end{equation*}
If $p=\infty$, then  it is called modulus of contiunity. And this \linebreak nondecreasing continuous function on the interval $[0,2\pi]$ having \linebreak properties:
\begin{equation*}
\omega(0)=0,  \quad \omega(t_{1}+t_{2})\leq \omega(t_{1})+ \omega(t_{2}).
\end{equation*}
A family of nonlinear Fourier bases as the extension of the classical Fourier basis, have been constructed and applied to signal processing \linebreak
\cite{2Chen,3Qian,10Qian,11Qian,12Qian}. For any complex number $a=\left\vert a\right\vert e^{it_{a}},\left\vert a\right\vert <1$, the \linebreak nonlinear phase function $\theta _{a}(t)$ is defined by the radical boundary value of the M\"{o}buis transformation
\begin{equation*}
\tau _{a}=\frac{z-a}{1-\overline{a}z}
\end{equation*}
that is,
\begin{equation*}
e^{i\theta _{a}(t)}:=\tau _{a}(e^{it})=\frac{e^{it}-a}{1-\overline{a}e^{it}}
\end{equation*}
It is easily seen that
\begin{equation}\label{1:1}
\theta _{a}(t+2\pi )=\theta _{a}(t)+2\pi
\end{equation}
and its derivative is the Poisson kernel
\begin{equation*}
\theta _{a}^{\prime }(t)=p_{a}(t)=\frac{1-\left\vert a\right\vert ^{2}}{1-2\left\vert a\right\vert \cos (t-t_{a})+\left\vert a\right\vert ^{2}}
\end{equation*}
which satisfies
\begin{equation}\label{1:2}
0<\frac{1-\left\vert a\right\vert }{1+\left\vert a\right\vert }\leq \theta _{a}^{\prime }(t)\leq \frac{1+\left\vert a\right\vert }{1-\left\vert
a\right\vert }
\end{equation}
Hence, $\theta _{a}(t)$ is a strictly monotonic increasing function, which makes $cos \theta _{a}(t)$ be a special mono-component signal \cite{10Qian,12Qian}. It
has been shown that for any sequence $ \left\{c_{k}\right\}_{k\in \mathbb{Z}}$ of finite nonzero terms, there holds
\begin{equation*}
 \sum_{k \in \mathbb{Z}} \mid c_{k}\mid^{2} =\frac{1}{2\pi} \int_{\mathbb{T}} \sum_{k \in \mathbb{Z}} \mid c_{k} e^{ik\theta_{a}(x)}\mid^{2}dx=\frac{1}{2\pi} \int_{\mathbb{T}}\sum_{k \in \mathbb{Z}} \mid c_{k} e^{ik \theta_{a}(t)}\mid^{2}p_{a}(t) dt
\end{equation*}
which combining with \eqref{1:2} implies that the so-called nonlinear Fourier basis $\left\{ e^{in \theta_{a}(t)}\right\}_{n\in \mathbb{Z}}$ forms a
Riesz basis for $L^{2}(\mathbb{T})$ with the upper bound $\sqrt{\frac{1+\left\vert a\right\vert }{1-\left\vert a\right\vert }}$ and the lower bound
$\sqrt{\frac{1-\left\vert a\right\vert }{1+\left\vert a\right\vert }}$. When $a = 0$,  $\left\{ e^{in \theta_{a}(t)}\right\}_{n\in \mathbb{Z}}$ is \linebreak simply the Fourier basis $\left\{ e^{int}\right\}_{n\in \mathbb{Z}}$.\\

Let $\tau^{a}_{n}$ be the space of all the nonlinear trigonometric polynomials of degree less than or equal to $n$, that is,
\begin{equation*}
\tau_{n}^{a}:= span \left\{e^{ik \theta_{a}(t)}: \mid k \mid \leq n \right\}.
\end{equation*}
The approximation error of $f \in L^{p}(\mathbb{T})$,
\begin{equation*}
E_{n}^{a}(f)_{p} = \inf _{T \in \tau^{a}_{n}}  \parallel f- T\parallel _{L^{p}(\mathbb{T})}.
\end{equation*}
Let us recall some known lemmas (see \cite{Huang}) which will be used in the sequel of paper.
\begin{lemma}
 Let $f\in C(\mathbb{T})$. We have
\begin{equation*}
E_{n}^{a}(f)_{\infty}\leq \frac{24}{1-\mid a \mid}\omega \left(f,\frac{1}{n}\right)_{\infty}.
\end{equation*}
\end{lemma}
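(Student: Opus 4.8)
The plan is to transfer the classical Jackson theorem for trigonometric approximation to the nonlinear setting by means of the change of variable $s=\theta_a(t)$. Since $\theta_a$ is a strictly increasing $C^1$-diffeomorphism of $\mathbb{T}$ onto itself satisfying $\theta_a(t+2\pi)=\theta_a(t)+2\pi$ by \eqref{1:1}, its inverse $\theta_a^{-1}$ shares the same structural properties, and $(\theta_a^{-1})'(s)=1/\theta_a'(\theta_a^{-1}(s))$ is bounded above and below by \eqref{1:2}. Given $f\in C(\mathbb{T})$, I would introduce $g:=f\circ\theta_a^{-1}\in C(\mathbb{T})$; then a trigonometric polynomial $P(s)=\sum_{|k|\le n}c_k e^{iks}$ of degree $\le n$ approximating $g$ pulls back to $P(\theta_a(t))=\sum_{|k|\le n}c_k e^{ik\theta_a(t)}\in\tau_n^a$, and since the sup-norm is invariant under the bijective change of variable, $\|f-P\circ\theta_a\|_\infty=\|g-P\|_\infty$. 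Hence $E_n^a(f)_\infty\le E_n(g)_\infty$, where $E_n$ is the ordinary best trigonometric approximation.

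Next I would invoke the classical Jackson inequality $E_n(g)_\infty\le C\,\omega(g,1/n)_\infty$ (with $C$ an absolute constant — the sharp Jackson constant is $1$, but a cruder explicit value suffices here and is what produces the factor yielding $24$ after combining with the distortion estimate). The remaining task is to bound $\omega(g,\delta)_\infty$ in terms of $\omega(f,\delta)_\infty$. For this I would use that $|\theta_a^{-1}(s+h)-\theta_a^{-1}(s)|\le \frac{1+|a|}{1-|a|}\,|h|$, which follows by the mean value theorem from the upper bound in \eqref{1:2} applied to $(\theta_a^{-1})'$; consequently
\begin{equation*}
\omega(g,\delta)_\infty=\sup_{|h|\le\delta}\|f(\theta_a^{-1}(\cdot+h))-f(\theta_a^{-1}(\cdot))\|_\infty\le \omega\!\left(f,\tfrac{1+|a|}{1-|a|}\,\delta\right)_\infty.
\end{equation*}

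Then I would apply the subadditivity/semi-additivity of the modulus of continuity recorded in the introduction, namely $\omega(f,\lambda t)_\infty\le(\lambda+1)\,\omega(f,t)_\infty$ for $\lambda>0$, to the factor $\lambda=\frac{1+|a|}{1-|a|}$; since $\frac{1+|a|}{1-|a|}+1=\frac{2}{1-|a|}$, this yields $\omega(g,1/n)_\infty\le\frac{2}{1-|a|}\,\omega(f,1/n)_\infty$. Combining the three estimates gives $E_n^a(f)_\infty\le \frac{2C}{1-|a|}\,\omega(f,1/n)_\infty$, and a standard admissible value of the Jackson constant delivers the stated constant $24$. The main obstacle, such as it is, is purely bookkeeping: one must be careful that the change of variable is applied consistently (inner vs.\ outer composition) so that the degree of the polynomial is genuinely preserved and the norm is genuinely unchanged, and one must track the constant through the Jackson step honestly rather than citing the sharp value; no deep estimate beyond \eqref{1:2} and the classical one-dimensional Jackson theorem is needed.
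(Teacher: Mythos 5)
Your proposal is correct, but note that the paper itself gives no proof of this lemma: it is quoted as known from \cite{Huang}, so there is no internal argument to compare against. Your route is the natural (and presumably the cited) one: transfer to the classical setting via $g=f\circ\theta_a^{-1}$, use that $P\mapsto P\circ\theta_a$ maps trigonometric polynomials of degree $\le n$ into $\tau_n^a$ with the sup-norm unchanged (so $E_n^a(f)_\infty\le E_n(g)_\infty$), apply the classical Jackson theorem, and then convert $\omega(g,\cdot)_\infty$ back to $\omega(f,\cdot)_\infty$ using the Lipschitz bound $\frac{1+|a|}{1-|a|}$ for $\theta_a^{-1}$ coming from \eqref{1:2} together with $\omega(f,\lambda t)_\infty\le(\lambda+1)\,\omega(f,t)_\infty$; in fact this last step is exactly the right-hand inequality of the paper's Lemma 2, which you have thereby rederived rather than merely invoked. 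The only point to state more carefully is the Jackson constant: the sharp constant $1$ refers to the modulus at step $\pi/(n+1)$, not $1/n$, so you should cite an explicit admissible value; any $C\le 12$ works, e.g.\ the classical $E_n(g)_\infty\le 6\,\omega(g,1/n)_\infty$, or $E_n(g)_\infty\le\omega\bigl(g,\pi/(n+1)\bigr)_\infty\le(\pi+1)\,\omega(g,1/n)_\infty$, either of which gives $E_n^a(f)_\infty\le\frac{2C}{1-|a|}\,\omega(f,1/n)_\infty\le\frac{24}{1-|a|}\,\omega(f,1/n)_\infty$ as claimed. With that constant made explicit, the argument is complete; the remaining details (periodicity of $g$ via \eqref{1:1}, bijectivity of $\theta_a$ on $\mathbb{T}$) are handled correctly in your outline.
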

\begin{lemma}
 Let $f\in C(\mathbb{T})$. We have
\begin{equation*}
\frac{1-\mid a \mid}{2}\omega(f,t)_{\infty}\leq \omega(f\circ \theta ^{-1}_{a},t)_{\infty} \leq \frac{2}{1-\mid a \mid}\omega(f,t)_{\infty}.
\end{equation*}
\end{lemma}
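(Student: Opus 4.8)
The plan is to exploit the two-sided bound \eqref{1:2} on $\theta_a'$, which makes both $\theta_a$ and its inverse strictly increasing Lipschitz maps of $\mathbb{T}$ onto itself. By \eqref{1:1} one has $\theta_a^{-1}(y+2\pi)=\theta_a^{-1}(y)+2\pi$, so $f\circ\theta_a^{-1}$ is genuinely $2\pi$-periodic and its modulus of continuity on $\mathbb{T}$ makes sense. Moreover, since $\frac{1-|a|}{1+|a|}\le\theta_a'(t)\le\frac{1+|a|}{1-|a|}$, the identity $(\theta_a^{-1})'(y)=1/\theta_a'(\theta_a^{-1}(y))$ gives the \emph{same} two-sided bound for $\theta_a^{-1}$; in particular both $\theta_a$ and $\theta_a^{-1}$ are Lipschitz with constant $L:=\frac{1+|a|}{1-|a|}$.

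For the upper bound, fix $0\le h\le t$ and $x\in\mathbb{T}$, and set $u=\theta_a^{-1}(x)$ and $\delta=\theta_a^{-1}(x+h)-\theta_a^{-1}(x)$. Monotonicity and the Lipschitz bound give $0\le\delta\le Lh$, hence
\[
\bigl|(f\circ\theta_a^{-1})(x+h)-(f\circ\theta_a^{-1})(x)\bigr|=|f(u+\delta)-f(u)|\le\omega(f,Lh)_{\infty}.
\]
Then I invoke the standard dilation estimate $\omega(f,\lambda s)_{\infty}\le(1+\lambda)\,\omega(f,s)_{\infty}$ for $\lambda\ge0$ (proved by telescoping the subadditivity $\omega(t_1+t_2)\le\omega(t_1)+\omega(t_2)$ over $\lceil\lambda\rceil$ steps), applied with $\lambda=L$ and $s=h\le t$, to obtain $\omega(f,Lh)_{\infty}\le(1+L)\,\omega(f,t)_{\infty}=\frac{2}{1-|a|}\,\omega(f,t)_{\infty}$. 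Taking the supremum over $x$ and over $0\le h\le t$ yields $\omega(f\circ\theta_a^{-1},t)_{\infty}\le\frac{2}{1-|a|}\,\omega(f,t)_{\infty}$.

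For the lower bound I run the identical argument with the roles of $\theta_a$ and $\theta_a^{-1}$ interchanged: writing $g=f\circ\theta_a^{-1}$, so that $f=g\circ\theta_a$, and using that $\theta_a$ is also increasing and $L$-Lipschitz, the computation above (with $g$ in place of $f$ and $\theta_a$ in place of $\theta_a^{-1}$) gives $\omega(f,t)_{\infty}=\omega(g\circ\theta_a,t)_{\infty}\le\frac{2}{1-|a|}\,\omega(g,t)_{\infty}$, which rearranges to $\frac{1-|a|}{2}\,\omega(f,t)_{\infty}\le\omega(f\circ\theta_a^{-1},t)_{\infty}$. The only points needing care are the periodicity bookkeeping that keeps every modulus on $\mathbb{T}$ and a clean statement of the dilation inequality; neither is deep, so I do not expect a genuine obstacle beyond assembling these elementary facts in the right order.
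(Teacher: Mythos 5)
Your argument is correct: the two-sided bound \eqref{1:2} does transfer to $(\theta_a^{-1})'$ (the interval $[\tfrac{1-|a|}{1+|a|},\tfrac{1+|a|}{1-|a|}]$ is stable under taking reciprocals), the dilation estimate $\omega(f,\lambda s)_\infty\le(1+\lambda)\omega(f,s)_\infty$ is the standard consequence of subadditivity, and $1+L=1+\tfrac{1+|a|}{1-|a|}=\tfrac{2}{1-|a|}$ reproduces the stated constants exactly, with the lower bound following by symmetry since $f=(f\circ\theta_a^{-1})\circ\theta_a$. Note that the paper itself offers no proof of this lemma --- it is quoted from \cite{Huang} --- but your argument is the natural one and matches the standard proof of that result.
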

In the present paper first we deal with some  properties of \linebreak  nonlinear Fourier series. Then  we discuss $n$-th partial sums
and Ces\`{a}ro  sum of nonlinear Fourier series. Also we prove the necessary and \linebreak sufficient condition for nonlinear
Fourier series which governs the $(C,1)$ \linebreak summability in $L^{p}(\mathbb{T})$ for arbitrary function $f$ from
$L^{p}(\mathbb{T})$. This \linebreak result is applied to the prove of Bernstein's inequality for nonlinear  trigonometric polynomials.

\section{Convergence of Nonlinear Fourier Series}

Let $f \in L^{1}(\mathbb{T})$. Then $1\leq p \leq \infty$ by nonlinear basis denoted by

\begin{equation*}
f(x)\thicksim \sum_{k= -\infty}^{\infty} c_{k} e^{ik \theta _{a}(x)}
\end{equation*}
be its series by nonlinear Fourier basis where

\begin{equation*}
c_{k} = c_{k} (f) = \frac{1}{2\pi} \int_{-\pi}^{\pi} f(t) e^{-ik \theta_{a}(t)} p_{a}(t) dt, \quad k \in\mathbb{Z}.
\end{equation*}
For simplicity throughtout the present paper  we write nonlinear Fourier series  as series by nonlinear Fourier basis.
Now we can begin to give some properties of nonlinear Fourier series:

Assume  $f,g \in L^{1}(\mathbb{T})$ and
\begin{equation*}
f(x)\thicksim \sum_{k= -\infty}^{\infty} c_{k} e^{ik \theta _{a}(x)}, \quad
g(x)\thicksim \sum_{k= -\infty}^{\infty} d_{k} e^{ik \theta _{a}(x)}.
\end{equation*}

For linearity  and convolution we have the following properties \linebreak respectively.
\begin{equation*}
(Af+Bg)(x)\thicksim \sum_{k= -\infty}^{\infty} (Ac_{k}+Bd_{k}) e^{ik \theta _{a}(x)}
\end{equation*}

\begin{equation*}
(f*g)(x)\thicksim \sum_{k= -\infty}^{\infty} \left(\frac{1+\mid a \mid}{1-\mid a \mid}\right)(c_{k} d_{k}) e^{ik \theta _{a}(x)}
\end{equation*}
Furthermore remember Sobolev space i.e.
\begin{equation*}
W^{p}_{r}(\mathbb{T})=\left\{f\in X^{p}(\mathbb{T}): f,...,f^{(r-1)}\in \mathcal{AC}(\mathbb{T}), f^{(r)}\in X^{p}(\mathbb{T}) \right\}.
\end{equation*}

Assume that $f \in W^{p}_{1}(\mathbb{T})$.  It can be easily seen that property i.e.
\begin{equation*}
f'(x)\thicksim \sum_{k= -\infty}^{\infty} \left(\frac{1+\mid a \mid}{1-\mid a \mid}\right)(ik c_{k}) e^{ik \theta _{a}(x)}
\end{equation*}
holds.

We wish to examine the convergence of nonlinear Fourier series. To discuss the convergence, pointwise or uniform, of nonlinear Fourier \linebreak series, we need to discuss the convergence of the sequence $\{S^{a}_{n}\}$ of partial sums. We have
\begin{equation*}
S_{n}^{a}(f)(x)= \sum_{k= -n}^{n} c_{k} e^{ik \theta _{a}(x)}
\end{equation*}
where
\begin{equation*}
c_{k} = c_{k} (f) = \frac{1}{2\pi} \int_{-\pi}^{\pi} f(t) e^{-ik \theta_{a}(t)} p_{a}(t) dt.
\end{equation*}

\begin{proposition}
 Let $S_{n}^{a}(f)$ be the sequence of partial sums of the \linebreak nonlinear Fourier series of $f$. Let  $f \in L^{1}(\mathbb{T})$  which is $2\pi$-periodic.
 Then
\begin{equation}\label{2:1}
S_{n}^{a}(f)(x)= \frac{1}{2\pi} \int_{-\pi}^{\pi} f(t) D_{n}(\theta_{a}(x)-\theta_{a}(t)) p_{a}(t)dt.
\end{equation}
\begin{equation}\label{2:2}
= \frac{1}{2\pi} \int_{-\pi}^{\pi} F(\theta_{a}(x)+t) D_{n}(t)dt,
\end{equation}
\begin{equation}\label{2:3}
= \frac{1}{2\pi} \int_{0}^{\pi} \left\{ F(\theta_{a}(x)-t)+F(\theta_{a}(x)+t)\right \}  D_{n}(t)dt.
\end{equation}
\end{proposition}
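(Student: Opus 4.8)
The plan is to start from the definition of the partial sum $S_n^a(f)(x)=\sum_{k=-n}^n c_k e^{ik\theta_a(x)}$ and substitute the integral formula for the coefficients $c_k$. Interchanging the finite sum with the integral (which is legitimate since the sum is finite and $f\in L^1$), I obtain
\begin{equation*}
S_n^a(f)(x)=\frac{1}{2\pi}\int_{-\pi}^{\pi} f(t)\left(\sum_{k=-n}^n e^{ik(\theta_a(x)-\theta_a(t))}\right)p_a(t)\,dt,
\end{equation*}
and recognizing the inner sum as the Dirichlet kernel $D_n(u)=\sum_{k=-n}^n e^{iku}$ evaluated at $u=\theta_a(x)-\theta_a(t)$ gives \eqref{2:1} immediately. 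This first step is essentially bookkeeping.

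For \eqref{2:2} I would perform the change of variable $s=\theta_a(t)$ in the integral \eqref{2:1}. Since $\theta_a$ is a strictly increasing $C^1$ bijection with $\theta_a'(t)=p_a(t)$ by the formula in the introduction, we have $ds=p_a(t)\,dt$, so the factor $p_a(t)\,dt$ is exactly $ds$ and the Jacobian is absorbed cleanly. The map $t\mapsto\theta_a(t)$ sends $[-\pi,\pi]$ to an interval of length $2\pi$ by the quasi-periodicity relation \eqref{1:1}, and writing $F:=f\circ\theta_a^{-1}$ (a $2\pi$-periodic function, again by \eqref{1:1}) the integral becomes $\frac{1}{2\pi}\int F(s)D_n(\theta_a(x)-s)\,ds$ over a period. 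Then a further substitution $s=\theta_a(x)+t$ (or $t=s-\theta_a(x)$), together with the evenness of $D_n$ and the fact that integrating a $2\pi$-periodic function over any interval of length $2\pi$ gives the same value, yields \eqref{2:2}. I should be slightly careful about the sign: $D_n(\theta_a(x)-s)=D_n(s-\theta_a(x))$ since $D_n$ is even, so after setting $t=s-\theta_a(x)$ the argument of $F$ is $\theta_a(x)+t$ as claimed.

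Finally, \eqref{2:3} follows from \eqref{2:2} by splitting the interval $[-\pi,\pi]$ into $[-\pi,0]$ and $[0,\pi]$, substituting $t\mapsto -t$ on the first piece, and using $D_n(-t)=D_n(t)$ to combine the two halves into $\frac{1}{2\pi}\int_0^\pi\{F(\theta_a(x)-t)+F(\theta_a(x)+t)\}D_n(t)\,dt$.

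The only genuine subtlety — the "hard part," such as it is — is making the change of variables and the periodicity argument rigorous: one must check that $F=f\circ\theta_a^{-1}$ is well-defined and $2\pi$-periodic (this uses \eqref{1:1}), that $\theta_a$ maps a period to a period, and that all the integrands are $2\pi$-periodic so that the interval of integration can be freely translated. The bounds \eqref{1:2} on $\theta_a'$ guarantee $\theta_a^{-1}$ is Lipschitz, so $F\in L^1(\mathbb{T})$ and every integral above is absolutely convergent, justifying each manipulation. Beyond that, the proof is a routine sequence of substitutions.
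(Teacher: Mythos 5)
Your proposal is correct and follows essentially the same route as the paper: substitute the coefficient formula into the partial sum to recognize the Dirichlet kernel (giving \eqref{2:1}), change variables via $s=\theta_a(t)$ with $ds=p_a(t)\,dt$ together with the $2\pi$-quasi-periodicity \eqref{1:1} to pass to $F=f\circ\theta_a^{-1}$ and shift the interval (giving \eqref{2:2}), then split $[-\pi,\pi]$ and use the evenness of $D_n$ (giving \eqref{2:3}). Your write-up is in fact somewhat more careful than the paper's about the Jacobian, the periodicity of $F$, and the sign in the evenness step, but it is the same argument.
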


\begin{proof}
 By the expression for the $c_{k}$  and considering the equality \linebreak $S_{n}^{a}(f)(x)= S_{n}(F)(\theta _{a}(x))$ where $F= f\circ \theta _{a}^{-1}$  we have already established \eqref{2:1}.  By a change of variable $u-\theta_{a}(x)=y$ and the \eqref{1:1} equality of   $\theta_{a}(x)$ phase function's $2\pi$-periodicity,  it follows that the integral does not change as long as the length of the interval of integration is $2\pi$ we get
\begin{equation*}
S_{n}^{a}(f)(x)= \frac{1}{2\pi} \int_{-\pi}^{\pi} F(\theta_{a}(x)+y) D_{n}(y)dy.
\end{equation*}
This proves \eqref{2:2}. Finally, we split the integral in \eqref{2:2} as the sum of integrals over $[-\pi, 0]$ and $[0, \pi]$. Now
\begin{equation*}
\int_{-\pi}^{0} F(\theta_{a}(x)+t) D_{n}(t)dt=\int_{0}^{\pi} F(\theta_{a}(x)+t) D_{n}(t)dt,
\end{equation*}
using the change of variable $y = -t$ and the evenness of $D_{n}$.  This proves \eqref{2:3}.
\end{proof}
 We can begin with properties of the operators $S^{a}_{n}$ of partial sums of nonlinear Fourier series. For this first we need following lemmas.

\begin{lemma}

Let $F:= f\circ \theta_{a}^{-1}$. Then we have

\begin{equation*}
\parallel F \parallel_{p} \leq \left\{
                                                 \begin{array}{ll}
                                                 \parallel f \parallel_{p} , & \hbox{$ p= 1, \infty, f \in  C(\mathbb{T}) \quad if  \quad p= \infty$;} \\
                                                   \left(\frac{1+\left\vert a\right\vert }{1-\left\vert a\right\vert }\right)^{1/p}  \parallel f \parallel_{p} & \hbox{$ 0<p<\infty$;}
                                                 \end{array}
                                               \right.
\end{equation*}

\end{lemma}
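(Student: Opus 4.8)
The plan is to reduce every case to one change of variables inside the integral defining $\parallel F\parallel_p$ — the same substitution already used (at the level of the coefficients $c_k$) in the proof of the Proposition above. Write $u=\theta_a(t)$; by \eqref{1:2} the phase function $\theta_a$ is a strictly increasing, locally absolutely continuous bijection of $\mathbb{R}$ with $du=\theta_a'(t)\,dt=p_a(t)\,dt$, and by the periodicity relation \eqref{1:1} it carries any interval of length $2\pi$ onto an interval of length $2\pi$. Since $F=f\circ\theta_a^{-1}$, for $0<p<\infty$ this substitution turns
\begin{equation*}
\parallel F\parallel_p^p=\frac{1}{2\pi}\int_{\mathbb{T}}\bigl|f(\theta_a^{-1}(u))\bigr|^p\,du
\end{equation*}
into
\begin{equation*}
\parallel F\parallel_p^p=\frac{1}{2\pi}\int_{\mathbb{T}}|f(t)|^p\,p_a(t)\,dt ,
\end{equation*}
each integral being taken over a full period in its own variable.

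Granting this identity, the estimates fall out of \eqref{1:2}. For $0<p<\infty$ the upper bound $p_a(t)\le\frac{1+|a|}{1-|a|}$ gives
\begin{equation*}
\parallel F\parallel_p^p\le\frac{1+|a|}{1-|a|}\cdot\frac{1}{2\pi}\int_{\mathbb{T}}|f(t)|^p\,dt=\frac{1+|a|}{1-|a|}\,\parallel f\parallel_p^p ,
\end{equation*}
and taking $p$-th roots yields the second alternative of the lemma; the case $p=1$ is the instance $p=1$ of this bound. For $p=\infty$ I would argue directly: since $\theta_a^{-1}$ is a bijection of $\mathbb{T}$ that preserves sets of measure zero, $F$ and $f$ have the same essential range, whence $\parallel F\parallel_\infty=\parallel f\parallel_\infty$; the hypothesis $f\in C(\mathbb{T})$ moreover makes $F=f\circ\theta_a^{-1}$ itself continuous, so this identity of norms is the honest pointwise one.

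The one point that needs care is the legitimacy of the substitution when $f$ is merely Lebesgue measurable rather than continuous. Here one observes that $\theta_a^{-1}$ is absolutely continuous — which follows from the two-sided bound \eqref{1:2} on $\theta_a'$, since then $(\theta_a^{-1})'=1/(p_a\circ\theta_a^{-1})$ is bounded — that $\theta_a^{-1}$ consequently maps null sets to null sets (so $f\circ\theta_a^{-1}$ stays measurable), and that, by \eqref{1:1}, integrating over $\mathbb{T}$ in the variable $u$ coincides with integrating over $\mathbb{T}$ in the variable $t$. These are routine measure-theoretic verifications; once they are in place the proof is exactly the two displayed lines above, so I expect no genuine obstacle — the entire content of the lemma is the Poisson-kernel estimate \eqref{1:2}.
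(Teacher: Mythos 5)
Your argument is essentially the paper's own: the same substitution $u=\theta_{a}(t)$, $du=p_{a}(t)\,dt$, combined with the Poisson-kernel bounds \eqref{1:2}, the only difference being that the paper runs the identity in the other direction (starting from $\Vert f\Vert_{p}$ and bounding $1/p_{a}$ from below), while you start from $\Vert F\Vert_{p}$ and bound $p_{a}$ from above; your $p=\infty$ case is the same bijection/supremum observation as in the paper, with the measure-zero details made explicit. One remark: for $p=1$ you honestly obtain only the constant $\left(\frac{1+\left\vert a\right\vert}{1-\left\vert a\right\vert}\right)$ rather than the constant $1$ promised by the first line of the lemma, but the paper's proof has exactly the same limitation (its argument for the first line covers only $p=\infty$), and indeed the constant-$1$ bound at $p=1$ cannot hold for general $f$ when $a\neq 0$, as one sees by concentrating $f$ where $p_{a}>1$.
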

\begin{proof}
 Let $f \in \mathcal{C}(\mathbb{T})$ ve $F:= f\circ \theta_{a}^{-1}$. Therefore for  $p= 1, \infty$ we have
\begin{equation*}
\parallel  f \parallel _{\infty}:= sup_{x\in \mathbb{T}} \mid f(x)\mid= sup_{\theta _{a}(x)\in \mathbb{T}}\mid  F(\theta_{a}(x))\mid= \| F\|_{\infty}.
\end{equation*}
And for $0<p<\infty$ the following inequality holds.
\begin{equation*}
\parallel f \parallel_{p}:= \left(\frac{1}{2\pi}\int_{\mathbb{T}}\mid f(t)\mid^{p}dt\right)^{1/p}.
\end{equation*}
By change of variable   $\theta_{a}(t)=u$ and using  phase function's property which is giving in \eqref{1:2}, we have the following equality.

\begin{equation*}
\parallel f \parallel_{p}  \geq \left(\frac{1-\left\vert a\right\vert }{1+\left\vert a\right\vert }\right)^{1/p}\left(\frac{1}{2\pi}\int_{\mathbb{T}}\mid
F(u)\mid^{p}du\right)^{1/p}=\left(\frac{1-\left\vert a\right\vert }{1+\left\vert a\right\vert }\right)^{1/p}\|F \|_{p}.
\end{equation*}
Therefore we have result that we wanted.
\end{proof}

For the spaces  $L^{1}(\mathbb{T})$, $C(\mathbb{T})$, one can evaluate the norms $\parallel S_{n}^{a} \parallel$ by direct computation.

\begin{theorem}
One has
\begin{equation*}
\parallel S_{n}^{a}(f)\parallel_{\infty} \leq \Lambda_{n}\parallel f \parallel_{\infty}.
\end{equation*}
\end{theorem}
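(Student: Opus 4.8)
The plan is to reduce everything to the integral formula \eqref{2:2} proved in the Proposition. Writing $F = f \circ \theta_{a}^{-1}$, that formula gives
\[
S_{n}^{a}(f)(x) = \frac{1}{2\pi}\int_{-\pi}^{\pi} F(\theta_{a}(x)+t)\, D_{n}(t)\, dt ,
\]
so, estimating the integrand by its absolute value,
\[
\left| S_{n}^{a}(f)(x) \right| \leq \|F\|_{\infty}\cdot \frac{1}{2\pi}\int_{-\pi}^{\pi} \left| D_{n}(t) \right|\, dt .
\]
The quantity on the right is independent of $x$, so the natural choice is to take $\Lambda_{n} := \frac{1}{2\pi}\int_{-\pi}^{\pi} \left| D_{n}(t) \right|\, dt$, the $n$-th Lebesgue constant (note that it does not depend on $a$).

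First I would record that, by the preceding Lemma on $\|F\|_{p}$ applied with $p=\infty$ and $f\in C(\mathbb{T})$, one has $\|F\|_{\infty} = \|f\|_{\infty}$; this is where the change of variable $u=\theta_{a}(t)$ and the two-sided bound \eqref{1:2} on $\theta_{a}'$ enter, although for the sup-norm the Jacobian $p_{a}$ drops out and the identity is exact. Second, I would observe that $\sup_{x\in\mathbb{T}}\left|S_{n}^{a}(f)(x)\right| = \sup_{u\in\mathbb{T}}\left|S_{n}(F)(u)\right|$, because $S_{n}^{a}(f)(x) = S_{n}(F)(\theta_{a}(x))$ and, by the strict monotonicity of $\theta_{a}$ together with the shift relation \eqref{1:1}, the map $x\mapsto\theta_{a}(x)$ carries $\mathbb{T}$ bijectively onto $\mathbb{T}$. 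Combining these two facts with the displayed estimate yields
\[
\|S_{n}^{a}(f)\|_{\infty} \leq \Lambda_{n}\,\|f\|_{\infty},
\]
as claimed.

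The argument is essentially a direct computation once \eqref{2:2} is in hand, so I do not expect a genuine obstacle. The only points requiring a little care are the exact identification $\|F\|_{\infty}=\|f\|_{\infty}$ (for which the continuity hypothesis and the Lemma are needed) and the bijectivity of $\theta_{a}$ used to pass from the supremum in $x$ to the supremum in $u$; after that, $\Lambda_{n}$ is simply the classical Lebesgue constant and the bound is immediate.
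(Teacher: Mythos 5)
Your argument is correct and is essentially the paper's own proof: both start from the representation \eqref{2:2}, bound the integrand to get $\lvert S_{n}^{a}(f)(x)\rvert \leq \lVert F\rVert_{\infty}\cdot\frac{1}{2\pi}\int_{-\pi}^{\pi}\lvert D_{n}(t)\rvert\,dt$, invoke the lemma giving $\lVert F\rVert_{\infty}=\lVert f\rVert_{\infty}$ for continuous $f$, and identify the remaining factor as the classical Lebesgue constant $\Lambda_{n}$. Your extra remark on the bijectivity of $\theta_{a}$ is harmless but unnecessary, since the pointwise bound is already uniform in $x$.
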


\begin{proof}
Let  consider $S_{n}^{a}$ be an operator $C(\mathbb{T})$ to $C(\mathbb{T})$. By using Lemma 2.1 and  equality \eqref{2:2}, we see that each of the
norms \eqref{2:2} is equal to
\begin{equation*}
\mid S_{n}^{a}(f)(x)\mid=\left| \frac{1}{2\pi} \int_{-\pi}^{\pi} F(\theta _{a}(x)+t)D_{n}(t )dt\right|
\end{equation*}
\begin{equation*}
\leq \frac{1}{2\pi} \int_{-\pi}^{\pi} \mid F(\theta _{a}(x)+t)\mid \mid D_{n}(t)\mid dt
\end{equation*}
\begin{equation*}
\leq \frac{1}{2\pi} \int_{-\pi}^{\pi} \parallel F\parallel_{\infty} \mid D_{n}(t)\mid dt
= \frac{1}{2\pi} \int_{-\pi}^{\pi} \parallel f \parallel_{\infty} \mid D_{n}(t) \mid  dt
\end{equation*}
From the Lebesgue constant's definition (see in \cite{Giuseppe})
\begin{equation*}
\parallel S_{n}^{a}(f)\parallel_{\infty} \leq \Lambda_{n}\parallel f \parallel_{\infty}
\end{equation*}
holds.
\end{proof}

\begin{theorem}
For all $f\in C(\mathbb{T})$,
\begin{equation*}
\parallel f-S_{n}^{a}(f)\parallel_{\infty} \leq c \log n E^{a}_{n}(f)
\end{equation*}
holds.
\end{theorem}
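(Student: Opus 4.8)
The plan is to reduce the statement to the classical Lebesgue estimate for ordinary Fourier series via the substitution $F := f \circ \theta_a^{-1}$, which was already used in the proof of the Proposition. First I would recall the identity $S_n^a(f)(x) = S_n(F)(\theta_a(x))$, so that
\begin{equation*}
\parallel f - S_n^a(f) \parallel_{\infty} = \sup_{x \in \mathbb{T}} \left| F(\theta_a(x)) - S_n(F)(\theta_a(x)) \right| = \parallel F - S_n(F) \parallel_{\infty},
\end{equation*}
where the last equality uses that $\theta_a$ is a continuous strictly increasing bijection of $\mathbb{T}$ onto itself (by \eqref{1:1} and \eqref{1:2}), so the supremum over $x$ equals the supremum over $u = \theta_a(x)$.

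Next I would invoke the classical Lebesgue inequality for trigonometric Fourier series: for any $F \in C(\mathbb{T})$ one has $\parallel F - S_n(F) \parallel_{\infty} \leq c \log n \, E_n(F)_{\infty}$, where $E_n(F)_{\infty}$ is the best approximation of $F$ by ordinary trigonometric polynomials of degree $\leq n$ and $c$ is an absolute constant; this follows from Theorem 2.1 (the Lebesgue constant bound $\Lambda_n = O(\log n)$) together with the standard argument $\parallel F - S_n F \parallel \leq \parallel F - t_n \parallel + \parallel S_n(F - t_n) \parallel \leq (1 + \Lambda_n)\parallel F - t_n\parallel$ applied to a near-best polynomial $t_n$, using $S_n t_n = t_n$.

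It then remains to compare $E_n(F)_{\infty}$ with $E_n^a(f)_{\infty}$. Here I would observe that if $T \in \tau_n^a$ is a near-best nonlinear polynomial for $f$, then $T \circ \theta_a^{-1}$ is an ordinary trigonometric polynomial of degree $\leq n$ (since $e^{ik\theta_a} \circ \theta_a^{-1} = e^{iku}$), and by the $p = \infty$ case of Lemma 2.1, $\parallel F - T\circ\theta_a^{-1}\parallel_{\infty} = \parallel f - T \parallel_{\infty}$; taking the infimum gives $E_n(F)_{\infty} \leq E_n^a(f)_{\infty}$ (in fact equality, by the symmetric argument). Combining the three steps yields $\parallel f - S_n^a(f)\parallel_{\infty} \leq c \log n \, E_n^a(f)_{\infty}$, as desired.

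The only real subtlety — and the step I would be most careful about — is the passage between supremum norms under the change of variable $u = \theta_a(x)$: one must know that $\theta_a : \mathbb{T} \to \mathbb{T}$ is genuinely a bijection (equivalently, that $\theta_a(2\pi) - \theta_a(0) = 2\pi$ with $\theta_a$ strictly increasing), which is exactly what \eqref{1:1} and the positivity in \eqref{1:2} provide; everything else is either the classical Lebesgue theorem or a direct application of Lemma 2.1. No quantitative control of the constant beyond absoluteness is needed, so I would not track the dependence on $|a|$ here — it is absorbed into the (different) classical constant $c$.
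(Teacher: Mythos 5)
Your proposal is correct, but it takes a slightly different route from the paper. The paper argues directly in the nonlinear setting: with $t\in\tau_{n}^{a}$ a (near-)best approximant it writes $f-S_{n}^{a}(f)=(f-t)+S_{n}^{a}(t-f)$, using $S_{n}^{a}(t)=t$, bounds the second term by the operator norm estimate of Theorem 2.1, $\parallel S_{n}^{a}(f)\parallel_{\infty}\leq\Lambda_{n}\parallel f\parallel_{\infty}$, and concludes via $\Lambda_{n}=O(\log n)$; the conjugation $F=f\circ\theta_{a}^{-1}$ enters only indirectly, through the proof of Theorem 2.1. You instead push the conjugation all the way: you transfer the whole statement to the classical setting via $S_{n}^{a}(f)(x)=S_{n}(F)(\theta_{a}(x))$ and the sup-norm invariance under the bijection $\theta_{a}$, invoke the classical Lebesgue inequality for $F$, and then transfer the error back using that $T\mapsto T\circ\theta_{a}^{-1}$ is a bijection of $\tau_{n}^{a}$ onto ordinary trigonometric polynomials of degree $\leq n$, giving $E_{n}(F)_{\infty}=E_{n}^{a}(f)_{\infty}$. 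The two arguments rest on exactly the same ingredients (near-best polynomial, reproduction of polynomials by the partial sum operator, and the Lebesgue constant), so they are of equal strength; your version makes completely explicit why the constant is absolute and independent of $\left\vert a\right\vert$ in the uniform norm, while the paper's version packages the conjugation into the single operator bound of Theorem 2.1 and so is a touch shorter. One small caveat, shared by the paper: the bound should be read as $(1+\Lambda_{n})E_{n}^{a}(f)_{\infty}$, i.e.\ with $c\log n$ interpreted for $n\geq 2$ (for $n=1$ the factor $\log n$ vanishes), and your citation of ``Theorem 2.1'' for the classical bound $\parallel S_{n}(F)\parallel_{\infty}\leq\Lambda_{n}\parallel F\parallel_{\infty}$ is a mislabel (that theorem concerns $S_{n}^{a}$), though the classical estimate is of course standard.
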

\begin{proof}
Let $f\in C(\mathbb{T})$. Consider  linear operator $S_{n}^{a}: \mathcal{C}(\mathbb{T})\rightarrow \mathcal{C}(\mathbb{T})$, $n=0, 1, ...$. The best approximation by $t \in \tau_{n}^{a}$ is
\begin{equation*}
E^{a}_{n}(f)= \parallel f-t\parallel_{\infty}.
\end{equation*}
Thus we can write the following equality.
\begin{equation*}
\parallel f-S_{n}^{a}(f)\parallel_{\infty}=\parallel f-t+t-S_{n}^{a}(f)\parallel_{\infty}
=\parallel f-t+S_{n}^{a}(f)(t-f)\parallel_{\infty}
\end{equation*}
\begin{equation*}
\leq \parallel f-t\parallel_{\infty}+\parallel S_{n}^{a}(f)(t-f)\parallel_{\infty} = E^{a}_{n}(f) +\parallel S_{n}^{a}(f)\parallel_{\infty}  \parallel f-
t\parallel_{\infty}
\end{equation*}
holds. Therefore from Theorem 2.1
\begin{equation*}
\parallel f-S_{n}^{a}(f)\parallel_{\infty} \leq (1+\Lambda_{n})E^{a}_{n}(f)
\end{equation*}
holds. Hence by using theorem for  Lebesgue constant (see in \cite{Giuseppe}) we have
\begin{equation*}
\parallel S_{n}^{a}(f)-f\parallel_{\infty}\leq  c  \log n E^{a}_{n}(f).
\end{equation*}
This completes the proof.
\end{proof}

\begin{theorem}
Let  consider  $S_{n}^{a}: L^{p}(\mathbb{T})\rightarrow L^{p}(\mathbb{T})$ linear operator \linebreak sequence, where $ n=0, 1, ...$ and $1\leq p<\infty$. Then
\begin{equation*}
\parallel S_{n}^{a}(f)\parallel_{p} \leq \left(\frac{1+\left\vert a\right\vert }{1-\left\vert a\right\vert }\right)^{2/p} \Lambda_{n} \parallel f \parallel_{p}
\end{equation*}
holds.
\end{theorem}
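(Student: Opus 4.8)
The plan is to transfer the estimate to the classical Fourier setting via the substitution $F:=f\circ\theta_a^{-1}$ and then to use nothing more than Young's convolution inequality together with the norm-comparison lemma above. Recall from the proof of the preceding proposition that $S_n^a(f)(x)=S_n(F)(\theta_a(x))$, and that \eqref{2:2} combined with the evenness of $D_n$ gives $S_n(F)=F*D_n$, where the convolution and all $L^p$-norms below are taken with respect to the normalized measure $\frac{1}{2\pi}\,dt$ on $\mathbb{T}$.

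First I would express $\parallel S_n^a(f)\parallel_p$ in terms of $\parallel S_n(F)\parallel_p$. Writing out the norm and changing variables by $u=\theta_a(x)$, so that $du=\theta_a'(x)\,dx=p_a(x)\,dx$ and the integral is unchanged by the $2\pi$-periodicity \eqref{1:1}, one obtains
\begin{equation*}
\parallel S_n^a(f)\parallel_p^p=\frac{1}{2\pi}\int_{\mathbb{T}}\bigl|S_n(F)(\theta_a(x))\bigr|^p\,dx=\frac{1}{2\pi}\int_{\mathbb{T}}\frac{\bigl|S_n(F)(u)\bigr|^p}{p_a(\theta_a^{-1}(u))}\,du.
\end{equation*}
Since the lower bound in \eqref{1:2} is equivalent to $1/p_a\le\frac{1+|a|}{1-|a|}$ pointwise, this yields
\begin{equation*}
\parallel S_n^a(f)\parallel_p\le\left(\frac{1+|a|}{1-|a|}\right)^{1/p}\parallel S_n(F)\parallel_p .
\end{equation*}

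Next I would bound $\parallel S_n(F)\parallel_p$. As $S_n(F)=F*D_n$, Young's inequality (with the normalized measure) gives $\parallel S_n(F)\parallel_p\le\parallel F\parallel_p\cdot\frac{1}{2\pi}\int_{-\pi}^{\pi}|D_n(t)|\,dt=\Lambda_n\parallel F\parallel_p$, valid for every $1\le p<\infty$, so the endpoint $p=1$ requires no separate argument. Finally the norm-comparison lemma above gives $\parallel F\parallel_p\le\left(\frac{1+|a|}{1-|a|}\right)^{1/p}\parallel f\parallel_p$. Chaining the three estimates,
\begin{equation*}
\parallel S_n^a(f)\parallel_p\le\left(\frac{1+|a|}{1-|a|}\right)^{1/p}\Lambda_n\left(\frac{1+|a|}{1-|a|}\right)^{1/p}\parallel f\parallel_p=\left(\frac{1+|a|}{1-|a|}\right)^{2/p}\Lambda_n\parallel f\parallel_p ,
\end{equation*}
which is the assertion.

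The only genuinely delicate point is the change of variables: after substituting $u=\theta_a(x)$ the Poisson weight $p_a$ appears in the \emph{denominator}, so it is the lower bound $\frac{1-|a|}{1+|a|}\le\theta_a'$ — equivalently the upper bound on $1/p_a$ — that must be used, and one should note the substitution is legitimate because $\theta_a$ is a strictly increasing $C^1$-diffeomorphism of an interval of length $2\pi$. Each of the two occurrences of $\left(\frac{1+|a|}{1-|a|}\right)^{1/p}$ — one from this substitution, one from passing from $\parallel F\parallel_p$ to $\parallel f\parallel_p$ — contributes $1/p$ to the exponent, accounting for the factor $\left(\frac{1+|a|}{1-|a|}\right)^{2/p}$; everything else is a routine application of Young's inequality and the definition of the Lebesgue constant $\Lambda_n$.
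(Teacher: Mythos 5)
Your proposal is correct and rests on exactly the same ingredients as the paper's proof: the identity $S_{n}^{a}(f)=S_{n}(F)\circ \theta _{a}$ coming from \eqref{2:2}, the two-sided bound \eqref{1:2} on $p_{a}$, the norm-comparison lemma $\parallel F\parallel _{p}\leq \left( \frac{1+\left\vert a\right\vert }{1-\left\vert a\right\vert }\right) ^{1/p}\parallel f\parallel _{p}$, and $\frac{1}{2\pi }\int \left\vert D_{n}\right\vert =\Lambda _{n}$. The only difference is organizational: you extract the Jacobian factor by a change of variables in the outer norm and then invoke Young's inequality for $F\ast D_{n}$, whereas the paper runs Minkowski's integral inequality directly on the double integral with the weight $p_{a}(x)$ inserted and removed inside; this is the same computation, with your version giving a somewhat cleaner account of where each factor $\left( \frac{1+\left\vert a\right\vert }{1-\left\vert a\right\vert }\right) ^{1/p}$ arises.
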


\begin{proof}
Let $1\leq p<\infty$. Consider operator $S_{n}^{a}: L^{p}(\mathbb{T})\rightarrow L^{p}(\mathbb{T})$, \linebreak $n=0, 1, ...$. Thus if we consider the inequality \eqref{2:2}
\begin{equation*}
\parallel S_{n}^{a}(f)\parallel_{p}= \left(\frac{1}{2\pi}\int_{-\pi}^{\pi} \mid S_{n}^{a}(f)(x)\mid^{p} dx \right)^{1/p}
\end{equation*}
\begin{equation*}
=\left( \frac{1}{2\pi} \int_{-\pi}^{\pi} \left| \frac{1}{2\pi} \int_{-\pi}^{\pi} F(\theta _{a}(x)-t) D_{n}(t) dt \right| ^{p}dx \right)^{1/p}
\end{equation*}
holds. Hence by using Minkowski integration  inequality (see in \cite{Zygmund}), we have
\begin{equation*}
\parallel S_{n}^{a}(f)\parallel_{p}\leq \frac{1}{2\pi} \int_{-\pi}^{\pi}\left ( \frac{1}{2\pi} \int_{-\pi}^{\pi} \mid F(\theta _{a}(x)+ t)\mid^{p}
p_{a}(x)dx \right) ^{\frac{1}{p}}
\end{equation*}
\begin{equation*}
\times \mid D_{n}(t)\mid \left(\frac{1}{p_{a}(x)}\right)^{1/p}dt
\end{equation*}
\begin{equation*}
\leq \left(\frac{1+\left\vert a\right\vert }{1-\left\vert a\right\vert }\right)^{1/p}\frac{1}{2\pi} \int_{-\pi}^{\pi}(\frac{1}{2\pi}
\int_{-\pi}^{\pi} \mid F(\theta _{a}(x)+t)\mid^{p} p_{a}(x)dx ) ^{\frac{1}{p}} \mid D_{n}(t)\mid dt
\end{equation*}
\begin{equation*}
\leq \left(\frac{1+\left\vert a\right\vert }{1-\left\vert a\right\vert }\right)^{1/p}\frac{1}{2\pi} \int_{-\pi}^{\pi}\parallel F \parallel_{p} \mid
D_{n}(t)\mid dt.
\end{equation*}
Therefore considering Lemma 2.2 and  Dirichlet kernel's definition (see in \cite{Lorentz}) is giving  result that we wanted.
\end{proof}
Now let examine the  convergence of partial sum of the nonlinear Fourier series in $L^{p}(\mathbb{T})$ space.

\begin{theorem}
Let $1\leq p<\infty$. Then for all  $ f \in L^{p}(\mathbb{T})$
\begin{equation}\label{2:4}
\parallel f-S_{n}^{a}(f)\parallel_{p} \rightarrow 0,\quad  n\rightarrow\infty
\end{equation}
holds if and only if, there exist a  $M>0$ constant that only depend on $p$ such that
\begin{equation}\label{2:5}
\parallel S_{n}^{a}(f)\parallel_{p} \leq M_{p}^{a} \parallel f \parallel_{p}.
\end{equation}
\end{theorem}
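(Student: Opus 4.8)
The plan is to recognize the equivalence of \eqref{2:4} and \eqref{2:5} as a standard Banach--Steinhaus argument, the one extra ingredient being the density of the nonlinear trigonometric polynomials in $L^{p}(\mathbb{T})$.

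\emph{Sufficiency of \eqref{2:5}.} First I would check that $\bigcup_{m\geq 0}\tau_{m}^{a}$ is dense in $L^{p}(\mathbb{T})$ for $1\leq p<\infty$. Indeed $C(\mathbb{T})$ is dense in $L^{p}(\mathbb{T})$, and for $g\in C(\mathbb{T})$ Lemma 1 gives $E_{m}^{a}(g)_{\infty}\leq \frac{24}{1-|a|}\,\omega(g,1/m)_{\infty}\to 0$, whence $E_{m}^{a}(g)_{p}\leq E_{m}^{a}(g)_{\infty}\to 0$. Next I would note the reproducing property of $S_{n}^{a}$ on $\tau_{m}^{a}$ for $n\geq m$: from the formula for $c_{k}$ and the change of variable $u=\theta_{a}(t)$ (using \eqref{1:1}) one gets $c_{k}(e^{ij\theta_{a}})=\delta_{jk}$, so $S_{n}^{a}(T)=T$ for every $T\in\tau_{m}^{a}$ and $n\geq m$. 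Then, given $f\in L^{p}(\mathbb{T})$ and $\varepsilon>0$, choose $m$ and $T\in\tau_{m}^{a}$ with $\parallel f-T\parallel_{p}<\varepsilon$; for all $n\geq m$, linearity of $S_{n}^{a}$ and \eqref{2:5} give
\begin{equation*}
\parallel f-S_{n}^{a}(f)\parallel_{p}\leq \parallel f-T\parallel_{p}+\parallel T-S_{n}^{a}(T)\parallel_{p}+\parallel S_{n}^{a}(T-f)\parallel_{p}\leq \varepsilon+0+M_{p}^{a}\,\varepsilon,
\end{equation*}
and letting $\varepsilon\to 0$ yields \eqref{2:4}.

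\emph{Necessity of \eqref{2:5}.} For the converse I would apply the uniform boundedness principle. By Theorem 2.3 each $S_{n}^{a}$ is a bounded linear operator on the Banach space $L^{p}(\mathbb{T})$ (complete since $1\leq p<\infty$). If \eqref{2:4} holds, then for every fixed $f$ the sequence $\{S_{n}^{a}(f)\}$ converges in $L^{p}(\mathbb{T})$, hence is norm bounded, i.e. $\sup_{n}\parallel S_{n}^{a}(f)\parallel_{p}<\infty$. Banach--Steinhaus then gives $M_{p}^{a}:=\sup_{n}\parallel S_{n}^{a}\parallel_{L^{p}\to L^{p}}<\infty$, which is exactly \eqref{2:5}; note that this constant, being a bound on operator norms, depends only on $p$ and $a$ and not on $f$, as required.

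The routine parts are the triangle-inequality estimate and the invocation of Banach--Steinhaus. \textbf{The main point to get right} is the density of $\tau_{n}^{a}$ in $L^{p}(\mathbb{T})$ together with the reproducing identity $S_{n}^{a}|_{\tau_{m}^{a}}=\mathrm{id}$ for $n\geq m$, since without these the sufficiency direction collapses; one should also verify that the estimate in the sufficiency part is uniform in $n\geq m$, which it is.
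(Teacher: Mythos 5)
Your proof is correct and follows essentially the same route as the paper: Banach--Steinhaus for the necessity direction, and for sufficiency a triangle-inequality argument using the uniform bound \eqref{2:5} together with approximation of $f$ by elements of $\tau_{n}^{a}$. The only difference is that you explicitly justify the density step $E_{n}^{a}(f)_{p}\to 0$ (via Lemma 1 and density of $C(\mathbb{T})$) and the reproducing identity $S_{n}^{a}(T)=T$, which the paper uses implicitly.
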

\begin{proof}
For necessity let  $ f \in L^{p}(\mathbb{T})$ and
$\parallel f-S_{n}^{a}(f)\parallel_{p} \rightarrow 0$, $n\rightarrow\infty$. Thus
$\left\{ S_{n}^{a}(f)\right\}$ is bounded since it  converges in $L^{p}(\mathbb{T})$ norm space for   $ f \in L^{p}(\mathbb{T})$. Therefore there  exists $ M_{f} >0$ for all  $ f \in L^{p}(\mathbb{T})$. Such that $\parallel S_{n}^{a}(f)\parallel_{p}< M_{f}$. Thus considering uniform  bounded principle (e.g. \cite{Simon}) we have
\begin{equation*}
\sup\left\{ \parallel S_{n}^{a}\parallel:n=0, 1,2,...\right\}<\infty.
\end{equation*}
If  we consider Theorem 2.3 and choose
\begin{equation*}
M= \sup \left\{ \parallel S_{n}^{a}\parallel:n=0, 1,2,...\right\}<\infty
\end{equation*}
for  $ f \in L^{p}(\mathbb{T})$, then  we can write for \eqref{2:5} inequality.\\

For sufficiency let $\parallel S_{n}^{a}(f)\parallel_{p} \leq M_{p}^{a} \parallel f \parallel_{p}$. For
\begin{equation*}
E_{n}^{a}(f)_{p}= \parallel f -t_{n}^{a}\parallel_{p}
\end{equation*}
where $ f \in L^{p}(\mathbb{T})$  and $n=0, 1,2,...$ . Therefore we  can prove \eqref{2:4} with the help of the nonlinear polynomials $t_{n}^{a}$ of best approximation of the function:
\begin{equation*}
\parallel f-S_{n}^{a}(f)\parallel_{p} =\parallel f -t_{n}^{a}+t_{n}^{a}+S_{n}^{a}(f)(x) \parallel_{p}
\end{equation*}
\begin{equation*}
\leq \parallel f -t_{n}^{a}\parallel_{p}+\parallel S_{n}^{a}(f-t_{n}^{a})\parallel_{p}\leq (M_{p}^{a} +1) E_{n}^{a}(f)_{p}.
\end{equation*}
Since
\begin{equation*}
E_{n}^{a}(f)_{p}\rightarrow 0,\quad n\rightarrow \infty,
\end{equation*}
this completes the proof.
\end{proof}

\begin{corollary}
  The norms of the operators $S_{n}^{a}(f)$ are bounded in each space $f \in L^{p}(\mathbb{T})$, $1<p< \infty$.
\end{corollary}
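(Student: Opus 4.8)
The corollary asks, for each fixed $p\in(1,\infty)$, for a finite bound on $\sup_n\|S_n^a\|_{L^p\to L^p}$. Note that Theorem~2.3 alone is not enough, since the Lebesgue constant $\Lambda_n$ appearing there grows like $\log n$; the point is precisely that in the range $1<p<\infty$ the factor $\Lambda_n$ can be replaced by a constant depending only on $p$. The plan is to transfer the problem, through the change of variable $u=\theta_a(x)$, to the classical M.~Riesz theorem on the uniform $L^p$-boundedness of the ordinary Fourier partial-sum operators $S_n$, and then convert back using Lemma~2.2.

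First I would use the identity $S_n^a(f)(x)=S_n(F)(\theta_a(x))$ with $F=f\circ\theta_a^{-1}$, already recorded in the proof of Proposition~2.1. Raising to the $p$-th power, integrating over $\mathbb{T}$, and substituting $u=\theta_a(x)$ — so that $du=\theta_a'(x)\,dx=p_a(x)\,dx$ and, by the lower bound $\theta_a'\ge\frac{1-|a|}{1+|a|}$ from \eqref{1:2}, $dx\le\frac{1+|a|}{1-|a|}\,du$ — while using the $2\pi$-periodicity \eqref{1:1} to bring the interval of integration back to $\mathbb{T}$, one obtains
\begin{equation*}
\|S_n^a(f)\|_p^p=\frac{1}{2\pi}\int_{\mathbb{T}}\bigl|S_n(F)(\theta_a(x))\bigr|^p\,dx\le\frac{1+|a|}{1-|a|}\cdot\frac{1}{2\pi}\int_{\mathbb{T}}\bigl|S_n(F)(u)\bigr|^p\,du=\frac{1+|a|}{1-|a|}\,\|S_n(F)\|_p^p .
\end{equation*}

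Next I would invoke the M.~Riesz theorem: for every $p$ with $1<p<\infty$ there is a constant $C_p$, independent of $n$, with $\|S_n(G)\|_p\le C_p\|G\|_p$ for all $G\in L^p(\mathbb{T})$ (see, e.g., \cite{Zygmund}). Applying this with $G=F$ and then using Lemma~2.2 in the form $\|F\|_p\le\bigl(\tfrac{1+|a|}{1-|a|}\bigr)^{1/p}\|f\|_p$ yields
\begin{equation*}
\|S_n^a(f)\|_p\le\Bigl(\tfrac{1+|a|}{1-|a|}\Bigr)^{1/p}C_p\,\|F\|_p\le\Bigl(\tfrac{1+|a|}{1-|a|}\Bigr)^{2/p}C_p\,\|f\|_p ,
\end{equation*}
so that $\sup_n\|S_n^a\|_{L^p\to L^p}\le\bigl(\tfrac{1+|a|}{1-|a|}\bigr)^{2/p}C_p<\infty$, which is the assertion. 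Moreover, taking $M_p^a=\bigl(\tfrac{1+|a|}{1-|a|}\bigr)^{2/p}C_p$ in \eqref{2:5} and applying Theorem~2.4 gives $\|f-S_n^a(f)\|_p\to0$ for every $f\in L^p(\mathbb{T})$, $1<p<\infty$.

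The only genuine obstacle is the M.~Riesz theorem itself — equivalently, the $L^p$-boundedness of the Riesz projection, or of the conjugate-function operator — which I would quote from a standard reference rather than reprove; everything else is the elementary change of variable controlled by \eqref{1:2} together with the norm comparison of Lemma~2.2. It is worth noting that the argument must break down at the endpoints $p=1$ and $p=\infty$, in accordance with the $\log n$ growth recorded in Theorems~2.1--2.3.
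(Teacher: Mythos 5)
Your proposal is correct, and it is essentially the argument the paper implicitly relies on: the paper states this corollary without any explicit proof, and the only route consistent with the constant $C_{p}\left(\frac{1+\left\vert a\right\vert}{1-\left\vert a\right\vert}\right)^{2/p}$ appearing in the displayed estimate right after the corollary is precisely your transfer argument. You correctly observe that Theorem 2.3 cannot be quoted directly because of the $\Lambda_{n}\sim\log n$ factor, and you supply what the paper omits: the identity $S_{n}^{a}(f)=S_{n}(F)\circ\theta_{a}$ with $F=f\circ\theta_{a}^{-1}$ (recorded in the proof of Proposition 2.1), the change of variable $u=\theta_{a}(x)$ controlled by the bounds \eqref{1:2}, the classical M.~Riesz theorem for the ordinary partial sums, and the norm comparison $\parallel F\parallel_{p}\leq\left(\frac{1+\left\vert a\right\vert}{1-\left\vert a\right\vert}\right)^{1/p}\parallel f\parallel_{p}$ from the paper's lemma, yielding $\sup_{n}\parallel S_{n}^{a}\parallel_{L^{p}\rightarrow L^{p}}\leq\left(\frac{1+\left\vert a\right\vert}{1-\left\vert a\right\vert}\right)^{2/p}C_{p}$. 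Your closing observation, that combining this with Theorem 2.4 gives $\parallel f-S_{n}^{a}(f)\parallel_{p}\rightarrow0$ for $1<p<\infty$, is exactly how the paper then uses the corollary, so your write-up in fact justifies both the corollary and the unproved display that follows it.
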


 From Theorem 2.2, Theorem 2.3  and Theorem 2.4 we now derive
\begin{equation*}
\parallel S_{n}^{a}(f)-f \parallel_{p} \leq \left\{
                                                 \begin{array}{ll}
                                                 C \log n E^{a}_{n}(f)_{p} , & \hbox{$ p= 1, \infty, f \in  C(\mathbb{T}),if  p= \infty$;} \\
                                                  C_{p} \left(\frac{1+\left\vert a\right\vert }{1-\left\vert a\right\vert }\right)^{2/p}  E^{a}_{n}(f)_{p} , & \hbox{$1<p< \infty$.}
                                                 \end{array}
                                               \right.
\end{equation*}

We see that the partial sums $S_{n}^{a}$  approximate almost as well as its \linebreak polynomial of best approximation. This is true even for $f \in  C(\mathbb{T}) $, if the factor $\log n$ is not essential for the problem considered. \\

For the partial sums $S_{n}^{a}$  of the Fourier series of $f$ we do not have $S_{n}^{a}(f)\rightarrow  f$ in $C(\mathbb{T})$ for each $f$. But we do have fast \linebreak convergence  of $S_{n}^{a}$  for smooth functions  $f$. Actually, the convergence \linebreak $\parallel S_{n}^{a}(f)-f \parallel_{\infty}    \rightarrow  0$ can be arbitrarily fast for some $f$ (that are not trigonometric polynomials): The following theorems shows that it is sufficient to take  $f(t) =  \sum_{k=0}^{\infty} [a_{k} cos(k\theta _{a}(x))+ b_{k}sin(k\theta _{a}(x))]$, where  $a_{k}$, $b_{k} $ converge to zero sufficiently fast without being zero.

 \begin{theorem}
For $f \in L^{1}(\mathbb{T})$
\begin{equation*}
\lim_{n\rightarrow\infty} c^{a}_{n}(f)= \lim_{n\rightarrow -\infty} c^{a}_{n}(f)= 0
\end{equation*}
holds.
 \end{theorem}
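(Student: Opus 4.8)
The statement is the Riemann--Lebesgue lemma for the nonlinear Fourier basis: the coefficients $c_n^a(f)$ tend to $0$ as $n\to\pm\infty$ for every $f\in L^1(\mathbb{T})$.

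\medskip

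\noindent\textbf{Plan of proof.} The plan is to transfer the problem to the classical Fourier coefficients via the substitution $u=\theta_a(t)$ and then invoke the classical Riemann--Lebesgue lemma. First I would recall that
\begin{equation*}
c_n^a(f)=\frac{1}{2\pi}\int_{-\pi}^{\pi} f(t)\,e^{-in\theta_a(t)}\,p_a(t)\,dt,
\end{equation*}
and that $p_a(t)=\theta_a'(t)$. Performing the change of variable $u=\theta_a(t)$ (legitimate because $\theta_a$ is a strictly increasing $C^1$ bijection of $\mathbb{T}$ onto itself by \eqref{1:1} and \eqref{1:2}, so $du=p_a(t)\,dt$ and the image of one period is again one period), one gets
\begin{equation*}
c_n^a(f)=\frac{1}{2\pi}\int_{-\pi}^{\pi} f\!\left(\theta_a^{-1}(u)\right)e^{-inu}\,du=\widehat{F}(n),
\end{equation*}
where $F:=f\circ\theta_a^{-1}$ and $\widehat{F}(n)$ is the ordinary $n$-th Fourier coefficient of $F$. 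So the $c_n^a(f)$ are literally the classical Fourier coefficients of the transported function $F$.

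\medskip

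\noindent Next I would check that $F\in L^1(\mathbb{T})$ whenever $f\in L^1(\mathbb{T})$; this is exactly Lemma 2.2 (the case $p=1$), which gives $\parallel F\parallel_1\le\parallel f\parallel_1<\infty$. With $F\in L^1(\mathbb{T})$ in hand, the classical Riemann--Lebesgue lemma applies and yields $\widehat{F}(n)\to 0$ as $n\to+\infty$ and as $n\to-\infty$. Tracing back through the identity $c_n^a(f)=\widehat{F}(n)$ gives the claimed conclusion
\begin{equation*}
\lim_{n\to\infty}c_n^a(f)=\lim_{n\to-\infty}c_n^a(f)=0.
\end{equation*}

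\medskip

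\noindent\textbf{Main obstacle.} There is no deep obstacle here; the only point requiring care is the justification of the change of variables, namely that $\theta_a$ is an absolutely continuous, strictly increasing bijection of $\mathbb{T}$ with $\theta_a'=p_a$ bounded above and below by \eqref{1:2}, so that the substitution preserves integrability and maps a full period to a full period. (An alternative, self-contained route that avoids even citing the classical lemma is to approximate $f$ in $L^1$ by a trigonometric polynomial $P$ in the $e^{ik\theta_a}$ system: for such $P$ one has $c_n^a(P)=0$ for all $|n|$ large, while $|c_n^a(f)-c_n^a(P)|=|c_n^a(f-P)|\le\tfrac{1+|a|}{1-|a|}\parallel f-P\parallel_1$ by the trivial bound $|e^{-in\theta_a(t)}|=1$ together with \eqref{1:2}; density of such polynomials then finishes the argument. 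I would mention this as a remark but carry out the shorter transfer argument in the main text.)
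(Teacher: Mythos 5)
Your proposal is correct, but it follows a genuinely different route from the paper's. You transfer the problem to the classical setting: the substitution $u=\theta_a(t)$ (legitimate since $\theta_a$ is strictly increasing with $\theta_a'=p_a$ bounded above and below by \eqref{1:2}, and maps one period onto one period by \eqref{1:1}) identifies $c_n^a(f)$ with the ordinary Fourier coefficient $\widehat{F}(n)$ of $F=f\circ\theta_a^{-1}\in L^1(\mathbb{T})$, and the classical Riemann--Lebesgue lemma finishes the argument. The paper instead argues directly inside the nonlinear system, exactly along the lines of the alternative you sketch in your closing remark: choose a nonlinear trigonometric polynomial $t\in\tau_N^a$ with $\Vert f-t\Vert_1$ small, note that its coefficients vanish for $|k|>N$, and bound $|c_k^a(f)|=|c_k^a(f-t)|$ by a constant times $\Vert f-t\Vert_1$. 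The trade-off: your main argument is shorter and needs only the boundedness of $p_a$ plus the classical lemma, whereas the paper's version is self-contained within the nonlinear system but silently assumes the density of $\bigcup_N\tau_N^a$ in $L^1(\mathbb{T})$, a fact the paper only substantiates later (via Ces\`aro-mean convergence in Section 3, or via the very transfer you employ). Two minor points: your remark-version of the density argument is in fact the more careful one, since you retain the weight $p_a$ in the coefficient --- the paper's displayed formula for $c_k^a$ inside this proof omits $p_a$, and without the weight the orthogonality giving $c_k^a(t)=0$ for $|k|>N$ does not hold as written; and in citing the lemma on $\Vert F\Vert_p$ you only need $\Vert F\Vert_1\le\frac{1+|a|}{1-|a|}\Vert f\Vert_1<\infty$, the precise constant being immaterial.
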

\begin{proof}

Let $\varepsilon>0$. In this case there exists a  $t \in \tau^{a}_{n}$ such that

\begin{equation*}
\parallel f-t \parallel< \varepsilon 2\pi.
\end{equation*}
In  this inequality let $t$ has degree  $N$. Therefore we can write

\begin{equation*}
t(x)= \sum^{m=N}_{m=-N} d_{m} e^{im\theta_{a}(x)}
\end{equation*}
Thus  for $\mid k \mid >N$,

\begin{equation*}
c^{a}_{k}(f)= \frac{1}{2\pi} \int^{\pi}_{-\pi} f(x)e^{-ik\theta_{a}(x)}dx
\end{equation*}

\begin{equation*}
= \frac{1}{2\pi} \int^{\pi}_{-\pi} f(x)e^{-ik\theta_{a}(x)}dx- \frac{1}{2\pi} \int^{\pi}_{-\pi} t(x)e^{-ik\theta_{a}(x)}dx
\end{equation*}

\begin{equation*}
= \frac{1}{2\pi} \int^{\pi}_{-\pi} [ f(x)-t(x)] e^{-ik\theta_{a}(x)}dx
\end{equation*}
holds. Hence we obtain the result from  following inequality.

\begin{equation*}
\mid c^{a}_{k}(f)\mid \leq  \parallel f(x)-t(x)\parallel_{1}<\varepsilon.
\end{equation*}
\end{proof}

\begin{corollary}
Let $f \in L^{1}(\mathbb{T})$. Then for all $f$,

\begin{equation*}
\lim_{n\rightarrow\infty} a^{a}_{n}(f)= \lim_{n\rightarrow -\infty} b^{a}_{n}(f)= 0
\end{equation*}
holds.
\end{corollary}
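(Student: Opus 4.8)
The plan is to deduce the corollary directly from Theorem 2.5. Write the nonlinear Fourier series of $f$ in its real form, so that the coefficients $a^{a}_{n}(f)$ and $b^{a}_{n}(f)$ occurring in the expansion $f(x)\thicksim \sum_{k\geq 0}\bigl[a^{a}_{k}(f)\cos(k\theta_{a}(x))+b^{a}_{k}(f)\sin(k\theta_{a}(x))\bigr]$ are given by
\[
a^{a}_{k}(f)=\frac{1}{2\pi}\int_{-\pi}^{\pi}f(x)\cos\bigl(k\theta_{a}(x)\bigr)p_{a}(x)\,dx,\qquad b^{a}_{k}(f)=\frac{1}{2\pi}\int_{-\pi}^{\pi}f(x)\sin\bigl(k\theta_{a}(x)\bigr)p_{a}(x)\,dx.
\]
Using $e^{\pm ik\theta_{a}(x)}=\cos(k\theta_{a}(x))\pm i\sin(k\theta_{a}(x))$ together with the formula for $c^{a}_{k}(f)$, one obtains the elementary identities $a^{a}_{k}(f)=c^{a}_{k}(f)+c^{a}_{-k}(f)$ and $b^{a}_{k}(f)=i\bigl(c^{a}_{k}(f)-c^{a}_{-k}(f)\bigr)$ for $k\geq 1$ (with $a^{a}_{0}(f)=c^{a}_{0}(f)$).

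First I would record these identities; this is the only computation involved, and it is immediate from the definition of the complex coefficients. Next, since $f\in L^{1}(\mathbb{T})$, Theorem 2.5 gives $\lim_{n\to\infty}c^{a}_{n}(f)=\lim_{n\to-\infty}c^{a}_{n}(f)=0$, so in particular both $c^{a}_{n}(f)\to 0$ and $c^{a}_{-n}(f)\to 0$ as $n\to\infty$. Substituting into the identities above and applying the triangle inequality,
\[
\bigl|a^{a}_{n}(f)\bigr|\leq\bigl|c^{a}_{n}(f)\bigr|+\bigl|c^{a}_{-n}(f)\bigr|\longrightarrow 0,\qquad \bigl|b^{a}_{n}(f)\bigr|\leq\bigl|c^{a}_{n}(f)\bigr|+\bigl|c^{a}_{-n}(f)\bigr|\longrightarrow 0
\]
as $n\to\infty$ (and, with the usual convention $b^{a}_{-n}=-b^{a}_{n}$, the same holds as $n\to-\infty$), which is exactly the assertion.

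There is no real obstacle here: the statement is a formal consequence of the Riemann--Lebesgue-type result in Theorem 2.5, applied along the index sequences $n$ and $-n$ simultaneously. The only point deserving a line of care is the normalization, i.e.\ checking that the passage between the exponential form $\sum_{k}c^{a}_{k}(f)e^{ik\theta_{a}(x)}$ and the real form in terms of $a^{a}_{k}(f),b^{a}_{k}(f)$ is carried out with the Poisson weight $p_{a}$ entering the coefficient formulas exactly as it does in the definition of $c^{a}_{k}(f)$; once this bookkeeping is settled the proof is complete.
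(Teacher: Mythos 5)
Your proposal is correct and follows essentially the same route as the paper: express $a^{a}_{n}(f)$ and $b^{a}_{n}(f)$ as linear combinations of $c^{a}_{n}(f)$ and $c^{a}_{-n}(f)$ and invoke Theorem 2.5. (Your identity $b^{a}_{n}(f)=i\bigl(c^{a}_{n}(f)-c^{a}_{-n}(f)\bigr)$ is in fact the correct sign convention, whereas the paper's displayed formula contains a small typo; this does not affect the argument, since both complex coefficients vanish in the limit.)
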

\begin{proof}
By writing $a^{a}_{n}(f)= c^{a}_{n}(f)+c^{a}_{-n}(f)$ ve $b^{a}_{n}(f)=  \frac{c^{a}_{n}(f)+c^{a}_{-n}(f)}{i}$ in \linebreak Theorem 2.5, we can complete the proof.
\end{proof}
\begin{theorem}
Let $f \in L^{1}(\mathbb{T})$. If

\begin{equation*}
 \int_{0}^{\pi} \left|\frac{F(\theta_{a}(x)+t)+F(\theta_{a}(x)-t)-2f(x)}{t}\right| dt<\infty
\end{equation*}
then for  $n\rightarrow\infty$, $S^{a}_{n}(f)(x)\rightarrow f(x)$ holds.
\end{theorem}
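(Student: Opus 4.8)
The plan is to transport the classical Dini test through the substitution $F=f\circ\theta_{a}^{-1}$, using the kernel representation already obtained. First I would start from \eqref{2:3},
\begin{equation*}
S_{n}^{a}(f)(x)=\frac{1}{2\pi}\int_{0}^{\pi}\bigl\{F(\theta_{a}(x)-t)+F(\theta_{a}(x)+t)\bigr\}D_{n}(t)\,dt ,
\end{equation*}
and use that $D_{n}$ is even with $\frac{1}{2\pi}\int_{-\pi}^{\pi}D_{n}(t)\,dt=1$, so that $\frac{1}{2\pi}\int_{0}^{\pi}D_{n}(t)\,dt=\tfrac12$. Since $f(x)=F(\theta_{a}(x))=\frac{1}{2\pi}\int_{0}^{\pi}2f(x)D_{n}(t)\,dt$, subtracting yields
\begin{equation*}
S_{n}^{a}(f)(x)-f(x)=\frac{1}{2\pi}\int_{0}^{\pi}\psi_{x}(t)\,D_{n}(t)\,dt ,\qquad \psi_{x}(t):=F(\theta_{a}(x)+t)+F(\theta_{a}(x)-t)-2f(x) ,
\end{equation*}
so it suffices to show that this integral tends to $0$ as $n\to\infty$.

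Next I would insert $D_{n}(t)=\sin\bigl((n+\tfrac12)t\bigr)/\sin(t/2)$ and peel off the singular factor of the kernel: write $\dfrac{1}{\sin(t/2)}=\dfrac{2}{t}+h(t)$ with $h(t)=\dfrac{1}{\sin(t/2)}-\dfrac{2}{t}$. A short expansion of $\sin(t/2)$ near $0$ shows that $h$ extends continuously to $[0,\pi]$ with $h(0)=0$, hence is bounded on $(0,\pi]$. This gives
\begin{equation*}
S_{n}^{a}(f)(x)-f(x)=\frac{1}{2\pi}\int_{0}^{\pi}G_{x}(t)\,\sin\bigl((n+\tfrac12)t\bigr)\,dt ,\qquad G_{x}(t):=\frac{2\psi_{x}(t)}{t}+\psi_{x}(t)h(t) .
\end{equation*}
The key point is that $G_{x}\in L^{1}(0,\pi)$: the term $\psi_{x}(t)/t$ is integrable on $(0,\pi)$ precisely by the hypothesis of the theorem, while $\psi_{x}\in L^{1}(0,\pi)$ since $F=f\circ\theta_{a}^{-1}\in L^{1}(\mathbb{T})$ (the norm estimate $\|F\|_{1}\le\|f\|_{1}$ established above), and $h$ is bounded.

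Finally I would conclude with a Riemann--Lebesgue argument: expanding $\sin\bigl((n+\tfrac12)t\bigr)=\sin(nt)\cos(t/2)+\cos(nt)\sin(t/2)$ reduces the last integral to a linear combination of $\int_{0}^{\pi}G_{x}(t)\cos(t/2)\sin(nt)\,dt$ and $\int_{0}^{\pi}G_{x}(t)\sin(t/2)\cos(nt)\,dt$; extending these $L^{1}$ integrands by zero to $\mathbb{T}$ and applying the Riemann--Lebesgue lemma --- which for the linear basis $a=0$ is exactly the content of Theorem 2.5 --- shows that both tend to $0$. Hence $S_{n}^{a}(f)(x)\to f(x)$. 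I expect the only real care needed to be bookkeeping rather than a conceptual obstacle: checking the boundedness of $h$ near $t=0$, and keeping track of the fact that the Dini integral in the hypothesis pins down the particular value $x$ (equivalently, the point $\theta_{a}(x)$ together with a fixed representative of $F$) at which both $\psi_{x}\in L^{1}(0,\pi)$ and $\psi_{x}(t)/t\in L^{1}(0,\pi)$ hold; once this is fixed, the argument is just the classical Dini test written in the variable $\theta_{a}(x)$.
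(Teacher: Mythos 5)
Your proposal is correct and follows essentially the same route as the paper: subtract $f(x)$ under the representation \eqref{2:3}, write $D_{n}(t)=\sin\bigl((n+\tfrac12)t\bigr)/\sin(t/2)$, show that $\phi_{x}^{a}(t)/\sin(t/2)$ is integrable from the Dini hypothesis, expand $\sin\bigl((n+\tfrac12)t\bigr)=\sin(nt)\cos(t/2)+\cos(nt)\sin(t/2)$, and finish with the Riemann--Lebesgue lemma. The only (harmless) difference is technical: you handle the singular factor by writing $1/\sin(t/2)=2/t+h(t)$ with $h$ bounded (which additionally uses $\psi_{x}\in L^{1}$, available since $F\in L^{1}$), whereas the paper passes directly from $\phi_{x}^{a}(t)/t\in L^{1}$ to $\phi_{x}^{a}(t)/\sin(t/2)\in L^{1}$ via Jordan's inequality.
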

\begin{proof}
Let assume  $f \in L^{1}(\mathbb{T})$. By using  \eqref{2:3} equality, we can write

\begin{equation*}
S^{a}_{n}(f)(x)-f(x)= \frac{1}{2\pi} \int_{0}^{\pi} \left\{ F(\theta_{a}(x)+t) + F(\theta_{a}(x)-t) -2 f(x)\right\} D_{n}(t)dt.
\end{equation*}
Therefore considering Dirichlet kernel's property (see in \cite{Zygmund}), we obtain the following.
\begin{equation*}
S^{a}_{n}(f)(x)-f(x)= \frac{1}{2\pi} \int_{0}^{\pi} \left\{ \frac{F(\theta_{a}(x)+t) + F(\theta_{a}(x)-t) -2 f(x)}{\sin \left( \frac{t}{2}\right)}\right\}
\end{equation*}

\begin{equation*}
\times\sin\left(\left(\frac{2n+1}{2}\right)t\right)dt
\end{equation*}
Now let  think the  $\phi^{a}_{x}(t)$ as following.

\begin{equation*}
\phi^{a}_{x}(t)= F(\theta_{a}(x)+t) + F(\theta_{a}(x)-t) -2 f(x).
\end{equation*}
In this case the last equality written as

\begin{equation*}
S^{a}_{n}(f)(x)-f(x)= \frac{1}{2\pi} \int_{0}^{\pi} \left\{ \frac {\phi^{a}_{x}(t)}{\sin \left( \frac{t}{2}\right)}   \sin  \left(\frac{2n+1}{2} \right) t \right\}dt.
\end{equation*}
From  hypothesis   $\phi^{a}_{x}(t)\in L^{1}(\mathbb{T})$. Hence

\begin{equation*}
\int_{0}^{\pi} \left|\frac{ \phi^{a}_{x}(t)}{t} \right| dt< \infty
\end{equation*}
holds and by using Jordan inequality (see  \cite{Zygmund}), we obtain

\begin{equation*}
\int_{0}^{\pi} \left| {\frac{ \phi^{a}_{x}(t)}{\sin\left( \frac{t}{2}  \right)} }\right| dt<\infty.
\end{equation*}
Therefore

\begin{equation*}
S^{a}_{n}(f)(x)-f(x)= \frac{1}{2\pi} \int_{0}^{\pi} \left\{ \frac {\phi^{a}_{x}(t)}{\sin\left( \frac{t}{2}  \right)} \right\} \sin \left( \left(
n+\frac {1}{2}\right) t \right)dt
\end{equation*}
\begin{equation*}
= \frac{1}{2\pi} \int_{0}^{\pi} \left\{ \frac {\phi^{a}_{x}(t)}{ \sin \left( \frac{t}{2}  \right)} \right\} \sin (nt) \cos \left(\frac{t}{2}\right)dt
\end{equation*}
\begin{equation*}
+\frac{1}{2\pi} \int_{0}^{\pi} \left\{ \frac {\phi^{a}_{x}(t)}{\sin\left( \frac{t}{2}  \right)} \right\} \cos (nt) \sin \left(\frac{t}{2}\right)dt
\end{equation*}
holds. Here if we say that

\begin{equation*}
g^{a}_{x}(t)=   \left\{ \frac {\phi^{a}_{x}(t)}{\sin\left( \frac{t}{2}  \right)} \right\} \cos \left(\frac{t}{2}\right).
\end{equation*}
We find that $g^{a}_{x}(t)\in L^{1}(\mathbb{T})$. In this case, considering the following equalities

\begin{equation*}
b_{n}\left( g^{a}_{x}(t)\right) =  \frac{1}{2\pi} \int_{-\pi}^{\pi}  g^{a}_{x}(t) \sin (nt)dt
\end{equation*}
\begin{equation*}
a_{n}\left( \phi^{a}_{x}(t)\right) =  \frac{1}{2\pi} \int_{-\pi}^{\pi}  \phi^{a}_{x}(t) \cos(nt)dt
\end{equation*}
we see that

\begin{equation*}
S^{a}_{n}(f)-f(x)= \frac{1}{4}\left( b_{n}\left( g^{a}_{x}(t) \right)\right) +a_{n} \left( \phi^{a}_{x}(t)\right)
\end{equation*}
holds. Thus by considering Corollary 2.2 and Riemann-Lebesgue Lemma (see in \cite{Zygmund}) for $\mid n \mid\rightarrow \infty$, we have

\begin{equation*}
b_{n}\left( g^{a}_{x}(t)\right) \rightarrow 0 \quad and \quad a_{n}\left( \phi^{a}_{x}(t)\right) \rightarrow 0.
\end{equation*}
So we obtain $S^{a}_{n}(f)(x)\rightarrow f(x)$ as  $ n \rightarrow \infty$. This completes the proof.
\end{proof}
\begin{corollary}
Let $f \in L^{1}(\mathbb{T})$. If $\mid F(x)-F(y) \mid\leq M \mid  x-y \mid^{\alpha}$, then  we have
\begin{equation*}
S^{a}_{n}(f)(x)\rightarrow f(x),\quad n\rightarrow\infty.
\end{equation*}
\end{corollary}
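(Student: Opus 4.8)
The plan is to deduce this corollary directly from Theorem~2.6, whose hypothesis is the Dini-type condition
\begin{equation*}
\int_{0}^{\pi}\left|\frac{F(\theta_{a}(x)+t)+F(\theta_{a}(x)-t)-2f(x)}{t}\right|dt<\infty .
\end{equation*}
So the entire task reduces to verifying that a H\"older condition on $F=f\circ\theta_{a}^{-1}$ forces this integral to be finite at every point $x\in\mathbb{T}$. First I would recall that $f(x)=F(\theta_{a}(x))$, since $F=f\circ\theta_{a}^{-1}$, so that the numerator in the Dini integral is exactly
\begin{equation*}
\phi^{a}_{x}(t)=\bigl(F(\theta_{a}(x)+t)-F(\theta_{a}(x))\bigr)+\bigl(F(\theta_{a}(x)-t)-F(\theta_{a}(x))\bigr).
\end{equation*}

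Next I would apply the hypothesis $|F(u)-F(v)|\le M|u-v|^{\alpha}$ twice, with $u=\theta_{a}(x)\pm t$ and $v=\theta_{a}(x)$, to obtain the pointwise bound $|\phi^{a}_{x}(t)|\le 2M|t|^{\alpha}$ for all $t\in[0,\pi]$. Dividing by $t$ and integrating,
\begin{equation*}
\int_{0}^{\pi}\left|\frac{\phi^{a}_{x}(t)}{t}\right|dt\le 2M\int_{0}^{\pi}t^{\alpha-1}\,dt=\frac{2M\pi^{\alpha}}{\alpha}<\infty ,
\end{equation*}
which is finite precisely because $0<\alpha\le 1$ (implicitly $\alpha>0$, as in the statement). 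Thus the hypothesis of Theorem~2.6 is satisfied at every $x$, and that theorem yields $S^{a}_{n}(f)(x)\to f(x)$ as $n\to\infty$.

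The only genuine point requiring care — the ``main obstacle,'' such as it is — is the bookkeeping that identifies $2f(x)$ with $F(\theta_{a}(x)+0)+F(\theta_{a}(x)-0)$, i.e.\ checking that the H\"older hypothesis is being used on $F$ at the correct argument and that $\theta_{a}$ being a bijection of $\mathbb{T}$ onto $\mathbb{T}$ (with the shift-by-$2\pi$ property \eqref{1:1}) lets the argument $\theta_{a}(x)\pm t$ roam over a genuine neighbourhood. One should also note that $f\in L^{1}(\mathbb{T})$ is automatic here: a H\"older-continuous $F$ is bounded, hence $f$ is bounded, hence integrable, so the ambient assumption of Theorem~2.6 costs nothing. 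I would close the proof by simply invoking Theorem~2.6 with the estimate above, without re-deriving the Riemann--Lebesgue machinery.
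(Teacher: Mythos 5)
Your proposal is correct and follows essentially the same route as the paper: both reduce the corollary to Theorem~2.6 by splitting the numerator as $\bigl(F(\theta_{a}(x)+t)-f(x)\bigr)+\bigl(F(\theta_{a}(x)-t)-f(x)\bigr)$, applying the H\"older bound to get $2Mt^{\alpha-1}$, and integrating via $\int_{0}^{\pi}t^{\alpha-1}\,dt=\pi^{\alpha}/\alpha$. Your extra remarks on identifying $f(x)=F(\theta_{a}(x))$ and on $\alpha>0$ only make explicit what the paper leaves implicit.
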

\begin{proof}
Let assume that $f \in L^{1}(\mathbb{T})$. Therefore

\begin{equation*}
 \left| \frac{F(\theta_{a}(x)+t) + F(\theta_{a}(x)-t) -2 f(x)}{t }\right|
\end{equation*}
\begin{equation*}
\leq \frac{\mid F(\theta_{a}(x)+t)-f(x)\mid +\mid F(\theta_{a}(x)-t) - f(x)\mid}{t }
\end{equation*}
\begin{equation*}
\leq \frac{2Mt^{\alpha}}{t}=2M t^{\alpha-1}
\end{equation*}
holds. Thus if we consider  $\int^{\pi}_{0}t^{\alpha-1} dt= \frac{\pi^{\alpha}}{\alpha}$ equality, we have the \linebreak following by using comparison test.

\begin{equation*}
\frac{1}{2\pi} \int_{0}^{\pi} \left|\frac{F(\theta_{a}(x)+t) + F(\theta_{a}(x)-t) -2 f(x)}{t}\right| dt < \infty
\end{equation*}
Therefore from Theorem 2.6, we obtain $S^{a}_{n}(f)(x)\rightarrow f(x), \quad n\rightarrow\infty$.
\end{proof}
\begin{corollary}
Let $f \in L^{1}(\mathbb{T})$. If $f$ is differentable and $F:= f\circ \theta_{a}^{-1}$ for all $x \in \mathbb{T}$, then we have

\begin{equation*}
S^{a}_{n}(f)(x)\rightarrow f(x),\quad n \rightarrow \infty.
\end{equation*}
\end{corollary}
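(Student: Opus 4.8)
The plan is to reduce the statement to Theorem 2.6 by checking that differentiability of $f$ forces, at every fixed $x\in\mathbb{T}$, the Dini-type condition
\[
\int_{0}^{\pi}\left|\frac{F(\theta_{a}(x)+t)+F(\theta_{a}(x)-t)-2f(x)}{t}\right|\,dt<\infty .
\]
First I would record that $f(x)=F(\theta_{a}(x))$ and that, by the two-sided bound \eqref{1:2} on $\theta_{a}'=p_{a}$, the phase function $\theta_{a}$ is a $C^{1}$ (indeed smooth) strictly increasing bijection of $\mathbb{T}$ whose inverse $\theta_{a}^{-1}$ is differentiable with $(\theta_{a}^{-1})'(u)=1/p_{a}(\theta_{a}^{-1}(u))$, a quantity that stays strictly between the positive constants $\frac{1-|a|}{1+|a|}$ and $\frac{1+|a|}{1-|a|}$. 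Consequently $F=f\circ\theta_{a}^{-1}$ is differentiable at every point of $\mathbb{T}$ by the chain rule.

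Next, fix $x$ and put $u=\theta_{a}(x)$. Differentiability of $F$ at $u$ gives, as $t\to 0^{+}$,
\[
F(u+t)-F(u)=F'(u)\,t+o(t),\qquad F(u-t)-F(u)=-F'(u)\,t+o(t),
\]
so the function $\phi_{x}^{a}(t)=F(\theta_{a}(x)+t)+F(\theta_{a}(x)-t)-2f(x)$ appearing in the proof of Theorem 2.6 satisfies $\phi_{x}^{a}(t)=o(t)$, whence $|\phi_{x}^{a}(t)/t|$ is bounded on some interval $[0,\delta]$. On the complementary interval $[\delta,\pi]$ one has $|\phi_{x}^{a}(t)/t|\le\delta^{-1}|\phi_{x}^{a}(t)|$, and $\phi_{x}^{a}\in L^{1}$ there because $\|F\|_{1}\le\|f\|_{1}$ by the norm lemma above (so each translate of $F$ is integrable) and $f(x)$ is constant in $t$. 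Adding the two estimates establishes the displayed integral bound, and Theorem 2.6 then yields $S_{n}^{a}(f)(x)\to f(x)$ as $n\to\infty$.

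If instead one assumes $f\in C^{1}(\mathbb{T})$, so that $F'$ is bounded, there is an even shorter route: $|F(x)-F(y)|\le\|F'\|_{\infty}|x-y|$, i.e. $F$ is Lipschitz, which is exactly the hypothesis of Corollary 2.3 with the exponent $\alpha=1$, and that corollary gives the conclusion at once. In either approach the only delicate point is the implication "$f$ differentiable $\Rightarrow$ $F$ differentiable", which is precisely where \eqref{1:2} is used, since it guarantees $\theta_{a}^{-1}$ is differentiable with nonvanishing derivative; once that is in hand, the estimate on $\phi_{x}^{a}$ near $t=0$ is routine and the remainder is just a split of the integral at a fixed $\delta>0$.
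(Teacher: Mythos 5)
Your argument is correct and follows essentially the same route as the paper: you verify the Dini-type hypothesis of Theorem 2.6 by splitting the integral at a fixed $\delta>0$, using differentiability of $F$ at $\theta_{a}(x)$ to bound $\left|\phi_{x}^{a}(t)/t\right|$ near $t=0$ and the integrability of $F$ (via the norm lemma) on $[\delta,\pi]$, exactly as the paper does. Your version is in fact slightly tidier, since you make explicit the chain-rule step (differentiability of $f$ plus the bounds \eqref{1:2} on $\theta_{a}'$ giving differentiability of $F$) and the $o(t)$ estimate that the paper's write-up only sketches.
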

\begin{proof}
Let $f \in L^{1}(\mathbb{T})$. Hence

\begin{equation*}
F'(\theta_{a}(x))=\lim_{t\rightarrow 0} \frac{F(\theta_{a}(x)+t)-F(\theta_{a}(x))}{t}
\end{equation*}
holds. For  $\exists \quad \delta>0 \ni 0<\mid t \mid < \delta$

\begin{equation*}
\left|\frac{F(\theta_{a}(x)+t)-F(\theta_{a}(x))}{t}-F'(\theta_{a}(x))\right|< 1
\end{equation*}
holds. Thus we have

\begin{equation*}
\frac{\left| F(\theta_{a}(x)+t)-F(\theta_{a}(x))\right|}{\mid t \mid}< \mid F'(\theta_{a}(x))\mid.
\end{equation*}
Therefore
\begin{equation*}
\frac{ \mid F(\theta_{a}(x)+t)-F(\theta_{a}(x))\mid}{\mid t \mid}  < \frac{\mid F(\theta_{a}(x)+t)-F(\theta_{a}(x))\mid}{\mid \delta \mid}
\end{equation*}
holds for $\mid t \mid >\delta$. Hence we have the following inequality.

\begin{equation*}
\int_{0}^{\pi} \left| \frac{F(\theta_{a}(x)+t) + F(\theta_{a}(x)-t) -2 f(x)}{t}\right| dt
\end{equation*}
\begin{equation*}
= \int_{0<t<\delta} \left|  \frac{F(\theta_{a}(x)+t) +F(\theta_{a}(x)-t) -2 f(x)}{t} \right|  dt
\end{equation*}
\begin{equation*}
+ \int_{\delta \leq t \leq \pi}\left| \frac{F(\theta_{a}(x)+t) + F(\theta_{a}(x)-t) -2f(x)}{t}\right|  dt
\end{equation*}
\begin{equation*}
\leq \int_{0<t<\delta} \left | \frac{F(\theta_{a}(x)+t)-f(x)}{t}\right| dt+ \int_{0<t<\delta} \left| \frac{ F(\theta_{a}(x)-t)-f(x) }{t}
\right| dt
\end{equation*}
\begin{equation*}
+ \int_{\delta \leq t \leq \pi}\left |\frac{F(\theta_{a}(x)+t) + F(\theta_{a}(x)-t) -2f(x)}{t}\right | dt
\end{equation*}
\begin{equation*}
\leq \int_{0<t<\delta} \mid t \mid F'(\theta_{a}(x))dt+ \int_{0<t<\delta} \mid t \mid F'(\theta_{a}(x))dt
\end{equation*}
\begin{equation*}
+\int_{\delta \leq t \leq \pi}\left|\frac{F(\theta_{a}(x)+t) + F(\theta_{a}(x)-t) -2f(x)}{\delta}\right|dt
\end{equation*}
So by considering  Theorem 2.6, we obtain

\begin{equation*}
S^{a}_{n}(f)(x)\rightarrow f(x),\quad n\rightarrow\infty.
\end{equation*}
\end{proof}

\section{Boundness and Ces\`{a}ro Mean  Summability for Nonlinear Fourier Series}

Precisely, we prove the necessary and sufficient condition for nonlinear Fourier series which governs the $(C,1)$ summability in $L^{p}$ for arbitrary function $f$ from $L^{p}$. This result is applied to the prove of Bernstein's inequality for nonlinear trigonometric polynomials.
Let $f \in L^{1}(\mathbb{T})$ and $S_{n}^{a}(f)$ is partial sum of nonlinear Fourier series. Define $n$-th  Fej\'{e}r (Ces\`{a}ro) for nonlinear Fourier series defined  by

\begin{equation*}
\sigma_{n}^{a}(f)(x):= \frac{1}{n+1}\sum_{k=0}^{n} S_{k}^{a}(f)(x).
\end{equation*}

Now let find new expressions for Ces\`{a}ro mean which are very useful.
\begin{proposition}
Let $\sigma_{n}^{a}(f)$ be the sequence of partial sums of \linebreak the nonlinear Fourier series of $f$. Let  $f \in L^{1}(\mathbb{T})$  which is $2\pi$-periodic. Then

\begin{equation}\label{3:1}
\sigma_{n}^{a}(f)(x)= \frac{1}{2\pi} \int_{-\pi}^{\pi} f(t) K_{n}(\theta_{a}(x)-\theta_{a}(t)) p_{a}(t)dt,
\end{equation}
\begin{equation}\label{3:2}
= \frac{1}{2\pi} \int_{-\pi}^{\pi} F(\theta_{a}(x)+t) K_{n}(t)dt,
\end{equation}
\begin{equation}\label{3:3}
= \frac{1}{2\pi} \int_{0}^{\pi} \left\{ F(\theta_{a}(x)-t)+F(\theta_{a}(x)+t)\right \} K_{n}(t)dt.
\end{equation}

\end{proposition}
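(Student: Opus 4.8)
The plan is to mimic the proof of Proposition 2.1 verbatim, the only new ingredient being that averaging the Dirichlet kernel over the orders $0,1,\dots,n$ produces the Fej\'er kernel. First I would start from the definition
\begin{equation*}
\sigma_{n}^{a}(f)(x)=\frac{1}{n+1}\sum_{k=0}^{n}S_{k}^{a}(f)(x),
\end{equation*}
substitute the representation \eqref{2:1} for each summand $S_{k}^{a}(f)(x)$, and interchange the finite sum with the integral. This interchange is immediate: $f\in L^{1}(\mathbb{T})$, the Poisson kernel $p_{a}$ is bounded by \eqref{1:2}, and each $D_{k}$ is a bounded function, so the integrand is dominated and the swap is legitimate (in fact it is just linearity of the integral over a finite sum). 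This yields
\begin{equation*}
\sigma_{n}^{a}(f)(x)=\frac{1}{2\pi}\int_{-\pi}^{\pi}f(t)\Bigl(\frac{1}{n+1}\sum_{k=0}^{n}D_{k}(\theta_{a}(x)-\theta_{a}(t))\Bigr)p_{a}(t)\,dt,
\end{equation*}
and setting $K_{n}(u):=\frac{1}{n+1}\sum_{k=0}^{n}D_{k}(u)$ — the Fej\'er kernel, with its familiar closed form $K_{n}(u)=\frac{1}{n+1}\bigl(\frac{\sin((n+1)u/2)}{\sin(u/2)}\bigr)^{2}$ — gives \eqref{3:1}.

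Next, to obtain \eqref{3:2} I would apply the same averaging procedure to \eqref{2:2} instead of \eqref{2:1}: averaging
\begin{equation*}
S_{k}^{a}(f)(x)=\frac{1}{2\pi}\int_{-\pi}^{\pi}F(\theta_{a}(x)+t)D_{k}(t)\,dt,\qquad F=f\circ\theta_{a}^{-1},
\end{equation*}
over $k=0,\dots,n$ and interchanging sum and integral as before produces exactly $\frac{1}{2\pi}\int_{-\pi}^{\pi}F(\theta_{a}(x)+t)K_{n}(t)\,dt$. Alternatively, one can derive \eqref{3:2} directly from \eqref{3:1} by repeating the change of variable $u-\theta_{a}(x)=y$ together with the $2\pi$-periodicity \eqref{1:1} of $\theta_{a}$, exactly as in the proof of Proposition 2.1; since $K_{n}$ is $2\pi$-periodic (being a finite average of $2\pi$-periodic kernels $D_{k}$), the integral is unchanged over any interval of length $2\pi$.

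Finally, \eqref{3:3} follows by splitting the integral in \eqref{3:2} as the sum of integrals over $[-\pi,0]$ and $[0,\pi]$ and using the change of variable $y=-t$ on the first piece. The key point is that $K_{n}$ is even, which is inherited from the evenness of every $D_{k}$; hence
\begin{equation*}
\int_{-\pi}^{0}F(\theta_{a}(x)+t)K_{n}(t)\,dt=\int_{0}^{\pi}F(\theta_{a}(x)-t)K_{n}(t)\,dt,
\end{equation*}
and adding this to the $[0,\pi]$ piece yields \eqref{3:3}. There is no genuine obstacle here; the only points requiring a word of justification are the sum–integral interchange (trivial, finite sum) and the fact that $K_{n}$ inherits periodicity and evenness from the $D_{k}$, both of which are needed to recycle the substitution and splitting arguments of Proposition 2.1.
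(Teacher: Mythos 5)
Your proposal is correct and follows essentially the same route as the paper: average the representations of Proposition 2.1 over $k=0,\dots,n$, identify $\frac{1}{n+1}\sum_{k=0}^{n}D_{k}=K_{n}$, then repeat the change of variable via the $2\pi$-periodicity \eqref{1:1} for \eqref{3:2} and the splitting with evenness of $K_{n}$ for \eqref{3:3}. In fact your write-up is more careful than the paper's own sketch (which contains typos such as $D_{n}$ in place of $K_{n}$); the only caveat is that your parenthetical closed form for $K_{n}$ assumes the normalization $D_{k}(u)=\sum_{|j|\le k}e^{iju}$, a convention the paper never fixes, but this remark is not used in the argument.
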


\begin{proof}
 By the expression for the $c_{k}$, phase function $\theta_{a}(x)$  and \linebreak  considering $\sigma_{n}^{a}(f)(x):=\frac{1}{n+1} \sum^{n}_{k=1} \sigma ^{a}_{k}(f)(x)$, we have already \linebreak  established \eqref{3:1}.  By a change of variable $y=\theta_{a}(t)-\theta_{a}(x)$ and the \eqref{1:1} equality of   $\theta_{a}(x)$'s $2\pi$-periodicity,  it follows that the integral does not change as long as the length of the interval of integration is $2\pi$ we get
\begin{equation*}
\sigma_{n}^{a}(f)(x)= \frac{1}{2\pi} \int_{-\pi}^{\pi} F(\theta_{a}(x)+y) D_{n}(y)dy.
\end{equation*}
This proves \eqref{3:2}. Finally, we split the integral in \eqref{3:2} as the sum of integrals over $[-\pi, 0]$ and $[0, \pi]$. Now
\begin{equation*}
\int_{-\pi}^{0} F(\theta_{a}(x)+t) K_{n}(t)dt=\int_{0}^{\pi} F(\theta_{a}(x)+t) D_{n}(t)dt,
\end{equation*}
using the change of variable $y =-t$ and the evenness of $K_{n}$. This proves \eqref{3:3}.
\end{proof}

\begin{theorem}
Let $\sigma_{n}^{a}: \mathcal{C}(\mathbb{T})\rightarrow \mathcal{C}(\mathbb{T})$ for  $n=0, 1, ...$ be  linear operator sequence.  Then we have
\begin{equation*}
\parallel \sigma_{n}^{a}(f)\parallel_{\infty} \leq \parallel f \parallel_{\infty}
\end{equation*}
for all  $f \in \mathcal{C}(\mathbb{T})$.
\end{theorem}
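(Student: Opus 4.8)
The plan is to reduce the estimate to the classical Fej\'er kernel bound by using the representation \eqref{3:2} together with Lemma 2.2. First I would recall the two standard facts about the Fej\'er kernel $K_n$: it is nonnegative, $K_n(t)\ge 0$, and it has normalized total mass $\frac{1}{2\pi}\int_{-\pi}^{\pi}K_n(t)\,dt = 1$. These are exactly the properties that make the classical Fej\'er means a positive approximate identity of norm one, and they are available from the references already cited in the paper (e.g.\ \cite{Zygmund}).

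Next I would start from \eqref{3:2}, writing
\begin{equation*}
\sigma_n^a(f)(x) = \frac{1}{2\pi}\int_{-\pi}^{\pi} F(\theta_a(x)+t)\,K_n(t)\,dt,
\qquad F = f\circ\theta_a^{-1}.
\end{equation*}
Taking absolute values and using $K_n\ge 0$,
\begin{equation*}
|\sigma_n^a(f)(x)| \le \frac{1}{2\pi}\int_{-\pi}^{\pi} |F(\theta_a(x)+t)|\,K_n(t)\,dt
\le \|F\|_\infty \cdot \frac{1}{2\pi}\int_{-\pi}^{\pi} K_n(t)\,dt = \|F\|_\infty .
\end{equation*}
Here the bound $|F(\theta_a(x)+t)| \le \|F\|_\infty$ holds uniformly in $x$ and $t$ because $\theta_a(x)+t$ ranges over all of $\mathbb{T}$ as $t$ does (and $F$ is continuous, since $f\in\mathcal{C}(\mathbb{T})$ and $\theta_a^{-1}$ is a homeomorphism of $\mathbb{T}$). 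Then I would invoke Lemma 2.2 in the case $p=\infty$, which gives $\|F\|_\infty = \|f\|_\infty$, and take the supremum over $x\in\mathbb{T}$ to conclude
\begin{equation*}
\|\sigma_n^a(f)\|_\infty \le \|f\|_\infty .
\end{equation*}

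The only genuine issue to be careful about is the positivity and normalization of $K_n$: since the paper's Fej\'er means are built from the \emph{nonlinear} partial sums $S_k^a$, one should confirm that the kernel appearing in \eqref{3:2} is literally the classical Fej\'er kernel in the variable $t$ — which it is, because \eqref{2:2} shows each $S_k^a$ is a convolution of $F$ against the classical Dirichlet kernel $D_k$, so averaging over $k=0,\dots,n$ produces the classical $K_n = \frac{1}{n+1}\sum_{k=0}^n D_k$. Once that identification is in hand, no Lebesgue-constant blow-up occurs (in contrast to Theorem 2.1), and the factor $\left(\frac{1+|a|}{1-|a|}\right)$ that appeared in the $L^p$ estimates does not enter here either, precisely because the $L^\infty$ case of Lemma 2.2 is an equality rather than an inequality. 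This is the step I would present most carefully; the rest is a one-line triangle-inequality estimate.
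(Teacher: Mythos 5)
Your proposal is correct and follows essentially the same route as the paper: it starts from the representation \eqref{3:2}, uses the positivity and unit mass of the classical Fej\'er kernel $K_n$ to bound $|\sigma_n^a(f)(x)|$ by $\|F\|_\infty$, and then identifies $\|F\|_\infty=\|f\|_\infty$ via the lemma on $F=f\circ\theta_a^{-1}$. Your added remark verifying that the kernel in \eqref{3:2} really is the classical $K_n=\frac{1}{n+1}\sum_{k=0}^n D_k$ is a welcome clarification of a point the paper leaves implicit, but it does not change the argument.
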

\begin{proof}
Let consider $\sigma_{n}^{a}:\mathcal{C} (\mathbb{T})\rightarrow \mathcal{C}(\mathbb{T})$ as a linear operator  for \linebreak $n=0, 1, ...$. Thus by using \eqref{3:2} inequality and Fej\'{e}r kernel's \linebreak positivity (see in \cite{Lorentz}), we have
\begin{equation*}
\mid \sigma_{n}^{a}(f)(x)\mid =\left|\frac{1}{2\pi} \int_{-\pi}^{\pi} F(\theta _{a}(x)-t) K_{n}(t) dt \right|
\end{equation*}
\begin{equation*}
\leq \frac{1}{2\pi} \int_{-\pi}^{\pi} \mid F(\theta _{a}(x)-t)\mid \mid K_{n}(t )\mid dt,
\end{equation*}
 \begin{equation*}
\leq \frac{1}{2\pi} \int_{-\pi}^{\pi} \parallel F \parallel_{\infty}  K_{n}(t)dt.
\end{equation*}
If we consider  Lemma 2.1 for last equality, we obtain that
\begin{equation*}
\mid \sigma_{n}^{a}(f)(x)\mid= \frac{1}{2\pi} \int_{-\pi}^{\pi} \parallel f \parallel_{\infty} K_{n}(t)dt.
\end{equation*}
Finally by using Fej\'{e}r  kernel's property (see in \cite{Zygmund}) we obtain the result.
\begin{equation*}
\parallel \sigma_{n}^{a}(f)\parallel_{\infty} \leq \parallel f \parallel_{\infty}
\end{equation*}
\end{proof}
\begin{theorem}
 Let $f\in \mathcal{C} (\mathbb{T})$. Then
\begin{equation*}
\parallel f- \sigma_{n}^{a}(f) \parallel_{\infty} \rightarrow 0, \quad n\rightarrow \infty.
\end{equation*}
\end{theorem}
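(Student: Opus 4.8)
The plan is to reduce the statement about $\sigma_n^a(f)$ on $\mathbb{T}$ to the classical Fejér theorem applied to the transported function $F := f\circ\theta_a^{-1}$, using the representation \eqref{3:2} together with the standard properties of the Fejér kernel. First I would observe that, since $f\in C(\mathbb{T})$ and $\theta_a$ is a homeomorphism of $\mathbb{T}$ (being strictly increasing by \eqref{1:2} and satisfying \eqref{1:1}), the function $F$ again belongs to $C(\mathbb{T})$, and by Lemma 2.1 its sup-norm equals that of $f$. Moreover, from $S_n^a(f)(x) = S_n(F)(\theta_a(x))$ one gets $\sigma_n^a(f)(x) = \sigma_n(F)(\theta_a(x))$, so that
\begin{equation*}
\parallel f - \sigma_n^a(f)\parallel_\infty = \sup_{x\in\mathbb{T}}\bigl| f(x) - \sigma_n(F)(\theta_a(x))\bigr| = \sup_{y\in\mathbb{T}}\bigl| F(y) - \sigma_n(F)(y)\bigr| = \parallel F - \sigma_n(F)\parallel_\infty ,
\end{equation*}
where in the middle step I substituted $y = \theta_a(x)$ and used that $\theta_a$ is onto.

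Next I would invoke the classical Fejér theorem: for any $F\in C(\mathbb{T})$ one has $\parallel F - \sigma_n(F)\parallel_\infty \to 0$ as $n\to\infty$. The usual proof routes through the representation \eqref{3:3} specialized to $a=0$, the positivity of $K_n$ and the normalization $\frac{1}{2\pi}\int_{-\pi}^\pi K_n(t)\,dt = 1$, together with the concentration property that for each fixed $\delta>0$, $\int_{\delta\le |t|\le \pi} K_n(t)\,dt \to 0$. Writing $F(y) - \sigma_n(F)(y) = \frac{1}{2\pi}\int_{-\pi}^{\pi}\bigl(F(y+t) - F(y)\bigr)K_n(t)\,dt$, one splits the integral at $|t| = \delta$, controls the inner part by the modulus of continuity $\omega(F;\delta)_\infty$ (which is small by uniform continuity of $F$) and the outer part by $2\parallel F\parallel_\infty \int_{\delta\le|t|\le\pi}K_n(t)\,dt$ (which is small for $n$ large). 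This yields the uniform convergence, and combined with the identity of the previous paragraph the theorem follows.

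Alternatively — and this is probably cleaner to cite given what the excerpt has already set up — I would use Lemma 2.1 and Theorem 3.1 to run a $3\varepsilon$ argument directly on $\mathbb{T}$: given $\varepsilon>0$, Lemma 1 (the Jackson-type estimate) produces $T\in\tau_n^a$ with $\parallel f - T\parallel_\infty = E_n^a(f)_\infty \le \frac{24}{1-|a|}\,\omega(f;\tfrac1n)_\infty \to 0$; since $\sigma_n^a$ reproduces polynomials of degree $\le n$ (a consequence of $\sigma_n^a(f) = \sigma_n(F)\circ\theta_a$ and the reproducing property of $\sigma_n$ on $\tau_n$), one has $\sigma_n^a(T) = T$ for $n$ large, so
\begin{equation*}
\parallel f - \sigma_n^a(f)\parallel_\infty \le \parallel f - T\parallel_\infty + \parallel \sigma_n^a(T - f)\parallel_\infty \le \bigl(1 + 1\bigr)\parallel f - T\parallel_\infty
\end{equation*}
by Theorem 3.1, and the right-hand side tends to $0$. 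The main obstacle — really the only subtle point — is making sure the reproducing identity $\sigma_n^a(T) = T$ for $T\in\tau_n^a$ is justified: this requires checking that $\sigma_n(e^{ik\,\cdot}) = e^{ik\,\cdot}$ holds for $|k|\le n$ with the correct Cesàro weighting, which is routine but should be stated explicitly so the argument does not rest on an unproven claim.
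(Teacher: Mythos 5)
Your primary argument is correct and is essentially the paper's own proof in different packaging: the paper works directly from the representation \eqref{3:2}, splits the integral at $|t|=\delta$, bounds the near part using continuity of $f$ and $\theta_a$ (i.e.\ uniform continuity of $F$) together with positivity and normalization of $K_n$, and bounds the tail by $\parallel f\parallel_\infty$ times $\frac{1}{(n+1)\sin^{2}(\delta/2)}$; your change of variable $y=\theta_a(x)$, which turns the statement into the classical Fej\'er theorem for $F=f\circ\theta_a^{-1}\in C(\mathbb{T})$, runs exactly the same estimate one level of notation away, so the two proofs are interchangeable.

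The alternative route you sketch, however, rests on a false step, and the ``routine check'' you defer would in fact fail: the Ces\`aro mean does not reproduce polynomials of degree $\le n$, since $\sigma_n(e^{ik\,\cdot})=\bigl(1-\tfrac{|k|}{n+1}\bigr)e^{ik\,\cdot}$, so $\sigma_n^a(T)\neq T$ for any nonconstant $T\in\tau_n^a$ (and, in addition, the $T$ produced by the Jackson-type Lemma 1 depends on $n$, so ``$\sigma_n^a(T)=T$ for $n$ large'' is not even well posed). The argument can be repaired: fix $\varepsilon>0$, choose one polynomial $T$ (independent of $n$) with $\parallel f-T\parallel_\infty<\varepsilon$ via Lemma 1, write $\parallel f-\sigma_n^a(f)\parallel_\infty\le \parallel f-T\parallel_\infty+\parallel T-\sigma_n^a(T)\parallel_\infty+\parallel\sigma_n^a(T-f)\parallel_\infty$, use Theorem 3.1 for the last term, and note that $\parallel T-\sigma_n^a(T)\parallel_\infty\to 0$ because the finitely many coefficients of $T$ are each multiplied by $1-\tfrac{|k|}{n+1}\to 1$. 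This is the density-type argument the paper itself uses later (Theorem 3.4) in $L^p$, but as written your version claims an identity that does not hold, so only your first argument stands as a complete proof.
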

\begin{proof}
Let $\sigma_{n}^{a}: \mathcal{C} (\mathbb{T})\rightarrow \mathcal{C} (\mathbb{T})$, $n=0, 1, ...$. Then we have

\begin{equation*}
| f(x)-\sigma_{n}^{a}(f)(x) | =\left| \frac{1}{2\pi} \int_{-\pi}^{\pi} [f(x)- F(\theta_{a}(x)+ t)] K_{n}(t)dt\right|
\end{equation*}

\begin{equation*}
\leq \frac{1}{2\pi} \int_{\mid t \mid <\delta }\mid f(x)- F(\theta _{a}(x)+t)\mid K_{n}(t)dt
\end{equation*}
\begin{equation*}
+ \frac{1}{2\pi} \int_{\delta \leq \mid t\mid \leq \pi}\mid f(x)- F(\theta _{a}(x)+t) \mid K_{n}(t) dt
\end{equation*}
by \eqref{3:2} equality. Therefore  since $f\in C(\mathbb{T})$ and $\theta _{a}$ continuous, we can write

\begin{equation*}
| f(x)-\sigma_{n}^{a}(f)(x) |\leq \frac{1}{2\pi} \int_{\mid t\mid <\delta } \frac{\varepsilon}{2} K_{n}(t) dt
\end{equation*}
\begin{equation*}
+ \frac{1}{2\pi} \int_{\delta \leq \mid t \mid \leq \pi}\mid f(x)- F(\theta _{a}(x)+t) \mid K_{n}(t)dt
\end{equation*}
\begin{equation*}
\leq \frac{\varepsilon}{4\pi} \int_{\mid t\mid <\delta } K_{n}(t)dt+ \frac{1}{2\pi} \int_{\delta \leq \mid t \mid \leq \pi}\mid f(x)\mid + \mid
F(\theta _{a}(x)+t) \mid K_{n}(t)dt
\end{equation*}

\begin{equation*}
\leq \frac{\varepsilon}{4\pi} +\frac{1}{2\pi} \int_{\delta \leq \mid t\mid \leq \pi} [\parallel f\parallel_{\infty} + \parallel
F\parallel_{\infty} ]K_{n}(t) dt.
\end{equation*}
Now if we use Lemma 2.1 and Fej\'{e}r  kernel's property (see in \cite{Zygmund}), we have

\begin{equation*}
\mid f(x)-\sigma_{n}^{a}(f)(x) \mid \leq \frac{\varepsilon}{4\pi} +\frac{1}{2\pi} 2 \parallel f \parallel_{\infty} \int_{\delta \leq \mid t\mid \leq \pi}
K_{n}(t)dt
\end{equation*}

\begin{equation*}
\leq \frac{\varepsilon}{4\pi} +\frac{1}{\pi} \parallel f \parallel_{\infty} \int_{\delta \leq \mid t\mid \leq \pi}
\frac{1}{n+1}\frac{1}{sin^{2}\frac{\delta}{2}} dt
\end{equation*}

\begin{equation*}
\leq \frac{\varepsilon}{2} + \frac{\parallel f \parallel_{\infty}}{\pi(n+1)sin^{2}\frac{\delta}{2}} \int_{\delta \leq \mid t\mid\leq \pi} dt
\end{equation*}

\begin{equation*}
\leq \frac{\varepsilon}{2} + \frac{2 \parallel f \parallel_{\infty}}{\pi(n+1)sin^{2}\frac{\delta}{2}}.
\end{equation*}
Since there exists  $\exists N \in \mathbb{N}:\quad n \geq N $
\begin{equation*}
\frac{2 \parallel f \parallel_{\infty}}{\pi(n+1)sin^{2}\frac{\delta}{2}}\rightarrow 0
\end{equation*}
as $n \rightarrow \infty $, we obtain
\begin{equation*}
\frac{2 \parallel f \parallel_{\infty}}{\pi(n+1)sin^{2}\frac{\delta}{2}}< \frac{\varepsilon}{2}.
\end{equation*}
Therefore
\begin{equation*}
\mid f(x)- \sigma_{n}^{a}(f)(x)\mid < \frac{\varepsilon}{2}+\frac{\varepsilon}{2}= \varepsilon
\end{equation*}
holds for $\exists N \in \mathbb{N} : n \geq N $ ve $\forall x\in \mathbb{T}$. Thus $\sigma_{n}^{a}(f)\rightarrow f$ uniformly as $n \rightarrow \infty$
\end{proof}

\begin{theorem}
Let consider  $\sigma_{n}^{a}: L^{p}(\mathbb{T})\rightarrow L^{p}(\mathbb{T})$, $1\leq p<\infty$ and $n=0, 1, ...$ linear operator sequence. Then
\begin{equation*}
\parallel \sigma_{n}^{a}(f)\parallel_{p} \leq  \left(\frac{1+\left\vert a\right\vert }{1-\left\vert a\right\vert }\right)^{2/p} \parallel f
\parallel_{p}
\end{equation*}
holds for all $f \in L^{p}(\mathbb{T})$.
\end{theorem}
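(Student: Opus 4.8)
The plan is to mimic the proof of Theorem 2.3, replacing the Dirichlet kernel $D_n$ by the Fej\'er kernel $K_n$; the point is that $K_n$ is nonnegative and satisfies $\frac{1}{2\pi}\int_{-\pi}^{\pi}K_n(t)\,dt = 1$, so the Lebesgue constant $\Lambda_n$ that appeared in the estimate for $S_n^a$ is now replaced by $1$. First I would start from the representation \eqref{3:2}, namely $\sigma_n^a(f)(x) = \frac{1}{2\pi}\int_{-\pi}^{\pi} F(\theta_a(x)+t)K_n(t)\,dt$ with $F := f\circ\theta_a^{-1}$, write out
$$\|\sigma_n^a(f)\|_p = \left(\frac{1}{2\pi}\int_{-\pi}^{\pi}\left|\frac{1}{2\pi}\int_{-\pi}^{\pi} F(\theta_a(x)+t)K_n(t)\,dt\right|^p dx\right)^{1/p},$$
and apply Minkowski's integral inequality (exactly as in Theorem 2.3) to move the outer $L^p$-norm inside the $t$-integral, using $K_n(t)\ge 0$ to drop the absolute value on the kernel:
$$\|\sigma_n^a(f)\|_p \le \frac{1}{2\pi}\int_{-\pi}^{\pi}\left(\frac{1}{2\pi}\int_{-\pi}^{\pi}|F(\theta_a(x)+t)|^p\,dx\right)^{1/p}K_n(t)\,dt.$$

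Next I would estimate the inner integral uniformly in $t$. Performing the change of variable $u = \theta_a(x)$, so that $du = \theta_a'(x)\,dx = p_a(x)\,dx$, and using the lower bound $p_a(x) \ge \frac{1-|a|}{1+|a|}$ from \eqref{1:2} together with the $2\pi$-periodicity \eqref{1:1} of $\theta_a$ (which allows $|F(u+t)|^p$ to be integrated over any interval of length $2\pi$), gives
$$\frac{1}{2\pi}\int_{-\pi}^{\pi}|F(\theta_a(x)+t)|^p\,dx \le \frac{1+|a|}{1-|a|}\cdot\frac{1}{2\pi}\int_{-\pi}^{\pi}|F(u)|^p\,du = \frac{1+|a|}{1-|a|}\,\|F\|_p^p.$$
Inserting this back and using once more that $\frac{1}{2\pi}\int_{-\pi}^{\pi}K_n(t)\,dt = 1$ yields $\|\sigma_n^a(f)\|_p \le \left(\frac{1+|a|}{1-|a|}\right)^{1/p}\|F\|_p$, and then Lemma 2.2, in the form $\|F\|_p \le \left(\frac{1+|a|}{1-|a|}\right)^{1/p}\|f\|_p$ valid for $0<p<\infty$, gives the claimed bound $\|\sigma_n^a(f)\|_p \le \left(\frac{1+|a|}{1-|a|}\right)^{2/p}\|f\|_p$.

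The only point requiring care is the bookkeeping in the change of variables: for fixed $t$ the shifted function $u\mapsto F(u+t)$ is still $2\pi$-periodic, so the image $\theta_a([-\pi,\pi])$ — an interval of length $2\pi$ by \eqref{1:1} — may be replaced by $[-\pi,\pi]$ without altering the value of the integral, and the estimate $dx \le \frac{1+|a|}{1-|a|}\,du$ must be applied in the correct direction. Apart from that, the argument is a routine repetition of the proof of Theorem 2.3, with the nonnegativity and unit mass of the Fej\'er kernel doing the work previously carried by the Lebesgue constant; I expect no genuine obstacle beyond this bookkeeping.
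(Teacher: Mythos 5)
Your proposal is correct and follows essentially the same route as the paper: the representation \eqref{3:2}, Minkowski's integral inequality, the nonnegativity and unit mass of the Fej\'er kernel, and the lemma comparing $\|F\|_p$ with $\|f\|_p$. In fact you are more careful than the paper at the one delicate point, since the paper simply writes the inner integral as $\|F\|_p$ and attributes the full exponent $2/p$ to the lemma, whereas you correctly account for one factor $\left(\frac{1+|a|}{1-|a|}\right)^{1/p}$ from the change of variables $u=\theta_a(x)$ with the Poisson-kernel bound \eqref{1:2} and the other from the lemma.
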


\begin{proof}
Let  consider $\sigma_{n}^{a}: L^{p}(\mathbb{T})\rightarrow L^{p}(\mathbb{T})$,$1\leq p<\infty$ and  $n=0, 1, ...$ linear operator sequence. Then we have
\begin{equation*}
\parallel \sigma_{n}^{a}(f)\parallel_{p}= \left(\frac{1}{2\pi}\int_{-\pi}^{\pi}\mid \sigma_{n}^{a}(f)(x)\mid^{p} dx\right)^{1/p}.
\end{equation*}
Hence by \eqref{3:2} equality, we have
\begin{equation*}
\parallel \sigma_{n}^{a}(f)\parallel_{p}=\frac{1}{2\pi}\int_
{-\pi}^{\pi} \left(\frac{1}{2\pi} \int_{-\pi}^{\pi} \mid
F(\theta _{a}(x)- t) K_{n}(t)
dt\mid^{p} dx \right)^{1/p}.
\end{equation*}
Now by using  Minkowski integral inequality (see in \cite{Zygmund}),
\begin{equation*}
\parallel \sigma_{n}^{a}(f)\parallel_{p}=\frac{1}{2\pi}\int_{-\pi}^{\pi} \left(\frac{1}{2\pi} \int_{-\pi}^{\pi} \mid F(\theta _{a}(x)- t)\mid^{p} dx
\right)^{1/p} K_{n}(t) dt
\end{equation*}
\begin{equation*}
=\frac{1}{2\pi}\int_{-\pi}^{\pi} \parallel F \parallel_{p} K_{n}(t) dt
\end{equation*}
holds. Therefore we obtain the following result by using Lemma 2.1.
\begin{equation*}
\parallel \sigma_{n}^{a}(f)\parallel_{p} \leq \left(\frac{1+\left\vert a\right\vert }{1-\left\vert a\right\vert }\right)^{2/p}\parallel f \parallel_{p}
\end{equation*}
\end{proof}

\begin{theorem}
Let $1\leq p < \infty$.Then we have
\begin{equation*}
\parallel f- \sigma_{n}^{a}(f) \parallel_{p} \rightarrow 0,\quad  n\rightarrow \infty
\end{equation*}
for all  $f \in L^{p}(\mathbb{T})$.
\end{theorem}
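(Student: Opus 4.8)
The plan is to combine the uniform $L^{p}$-boundedness of the Fej\'{e}r operators $\sigma_{n}^{a}$ (Theorem 3.3) with the already-established uniform convergence $\sigma_{n}^{a}(g)\rightarrow g$ on the dense subspace $\mathcal{C}(\mathbb{T})$ (Theorem 3.2), via the standard ``$\varepsilon/3$'' density argument. Write $C_{a}:=\left(\frac{1+\left\vert a\right\vert }{1-\left\vert a\right\vert }\right)^{2/p}$ for the constant occurring in Theorem 3.3, so that $\parallel \sigma_{n}^{a}(h)\parallel_{p}\leq C_{a}\parallel h\parallel_{p}$ for every $h\in L^{p}(\mathbb{T})$ and every $n$.

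First I would fix $f\in L^{p}(\mathbb{T})$ and $\varepsilon>0$. Since $1\leq p<\infty$, the space $\mathcal{C}(\mathbb{T})$ is dense in $L^{p}(\mathbb{T})$, so there is $g\in \mathcal{C}(\mathbb{T})$ with $\parallel f-g\parallel_{p}<\varepsilon$. Using the linearity of $\sigma_{n}^{a}$ and the triangle inequality, I would split
\[
\parallel f-\sigma_{n}^{a}(f)\parallel_{p}\leq \parallel f-g\parallel_{p}+\parallel g-\sigma_{n}^{a}(g)\parallel_{p}+\parallel \sigma_{n}^{a}(g-f)\parallel_{p}.
\]
The first term is less than $\varepsilon$ by the choice of $g$, and the third term is at most $C_{a}\parallel f-g\parallel_{p}<C_{a}\varepsilon$ by Theorem 3.3. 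For the middle term, since $\mathbb{T}$ carries a finite (normalized) measure one has $\parallel h\parallel_{p}\leq \parallel h\parallel_{\infty}$ for every $h\in \mathcal{C}(\mathbb{T})$, hence $\parallel g-\sigma_{n}^{a}(g)\parallel_{p}\leq \parallel g-\sigma_{n}^{a}(g)\parallel_{\infty}$, and the right-hand side tends to $0$ as $n\rightarrow\infty$ by Theorem 3.2. Thus there is $N$ such that $\parallel g-\sigma_{n}^{a}(g)\parallel_{p}<\varepsilon$ for all $n\geq N$, and therefore $\parallel f-\sigma_{n}^{a}(f)\parallel_{p}<(2+C_{a})\varepsilon$ for all $n\geq N$. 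Since $\varepsilon>0$ was arbitrary, $\parallel f-\sigma_{n}^{a}(f)\parallel_{p}\rightarrow 0$.

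There is no serious obstacle here: the argument is routine once Theorems 3.2 and 3.3 are in hand. The only points meriting care are the density of $\mathcal{C}(\mathbb{T})$ in $L^{p}(\mathbb{T})$ for $1\leq p<\infty$ and the elementary embedding bound $\parallel h\parallel_{p}\leq \parallel h\parallel_{\infty}$ on $\mathbb{T}$; I would also note in passing that $\sigma_{n}^{a}$ does map $\mathcal{C}(\mathbb{T})$ into $\mathcal{C}(\mathbb{T})$, as already used in Theorem 3.2, which follows from the representation \eqref{3:2} together with the continuity of $\theta_{a}$ and of $F=g\circ\theta_{a}^{-1}$.
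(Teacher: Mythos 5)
Your proposal is correct and follows essentially the same route as the paper: approximate $f$ by a continuous $g$, split $\parallel f-\sigma_{n}^{a}(f)\parallel_{p}$ by the triangle inequality, handle the middle term via Theorem 3.2 and the last term via the uniform bound of Theorem 3.3. Your justification of the middle term through the elementary embedding $\parallel h\parallel_{p}\leq\parallel h\parallel_{\infty}$ is in fact cleaner than the paper's passing appeal to the Lusin theorem, but the argument is the same $\varepsilon/3$ density scheme.
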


\begin{proof}
Let $\sigma_{n}^{a}: L_{p}(\mathbb{T})\rightarrow L_{p}(\mathbb{T})$,  $n=0, 1,... $ linear operator  and $f \in L_{p}(\mathbb{T})$.  Take $\varepsilon > 0$.Then there exists   $\exists \quad g\in \mathcal{C}(\mathbb{T})$ such that

\begin{equation*}
\parallel f- g \parallel_{p}< \frac{\varepsilon}{3}.
\end{equation*}

Let  $g \in \mathcal{C}(\mathbb{T})$. Then for $n\rightarrow\infty$, we have $\sigma_{n}^{a}(g)\rightarrow g $ converges uniformly by Theorem 3.2.
Thus by using the Lusin Theorem (see in \cite{Lorentz}), there exists $\exists \quad n \geq N$ such that $ \parallel \sigma_{n}^{a}(g)- g \parallel_{p}<\frac{\varepsilon}{3}$.
\begin{equation*}
\parallel f- \sigma_{n}^{a}(f)\parallel_{p}= \parallel f-g+g-\sigma_{n}^{a}(g)+\sigma_{n}^{a}(g)- \sigma_{n}^{a}(f)\parallel_{p}
\end{equation*}

\begin{equation*}
\leq \parallel f- g \parallel_{p}+ \parallel g- \sigma_{n}^{a}(g) \parallel_{p}+ \parallel \sigma_{n}^{a} (f- g) \parallel_{p}
\end{equation*}
Therefore by Theorem 3.3, we have
\begin{equation*}
\parallel f- \sigma_{n}^{a}(f)\parallel_{p}\leq \parallel f- g \parallel_{p}+ \parallel g- \sigma_{n}^{a}(g)
\parallel_{p}+ \left(\frac{1+\left\vert a\right\vert }{1-\left\vert a\right\vert }\right)^{1/p} \parallel f- g \parallel_{p}
\end{equation*}
\begin{equation*}
\leq \left(\left(\frac{1+\left\vert a\right\vert }{1-\left\vert a\right\vert }\right)^{2/p}+1\right)\parallel f- g
\parallel_{p}+\frac{\varepsilon}{3}< \varepsilon.
\end{equation*}
This completes the proof.
\end{proof}

\section{Bernstein's inequality for Nonlinear Fourier Series}

Applying the inequalities for the Ces\`{a}ro means of nonlinear \linebreak trigonometric series derived in the previous section, we can prove the nonlinear version of the well-known Bernstein's inequality. For any trigonometric polynomial $t_{n}(x)$ of order $\leq n$, for every $1\leq p \leq \infty$,
we have
 \begin{equation}\label{4:1}
\left( \int_{-\pi}^{\pi}\mid t'_{n}(x)\mid^{p} dx \right)^{1/p} \leq c  n \left( \int_{-\pi}^{\pi}\mid t_{n}(x)\mid^{p} dx \right)^{1/p}.
\end{equation}
The last inequality is known as integral Bernstein's inequality.
The following extension of \eqref{4:1} is true.
\begin{theorem}
Let $1 < p <\infty$ and assume that $t_{a} \in \tau_{n}^{a}$. Then the  inequality
\begin{equation}
\int_{-\pi}^{\pi}\mid t'_{n,a}(x)\mid^{p} dx \leq C \left( \frac{1+\mid a \mid}{ 1-\mid a \mid}\right)^{2 \left(1+1/p\right)}  \int_{-\pi}^{\pi}\mid t_{n,a}(x)\mid^{p} dx
\end{equation}
holds.
\end{theorem}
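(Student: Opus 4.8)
The plan is to reduce the inequality to the classical integral Bernstein inequality \eqref{4:1} by ``straightening the phase''. Write $t_{n,a}(x)=\sum_{|k|\leq n}c_{k}e^{ik\theta_{a}(x)}$ and set $F(u):=\sum_{|k|\leq n}c_{k}e^{iku}$, an ordinary $2\pi$-periodic trigonometric polynomial of degree $\leq n$ with $t_{n,a}=F\circ\theta_{a}$. Since $\theta_{a}$ is smooth and $\theta_{a}'=p_{a}>0$, the chain rule gives $t_{n,a}'(x)=F'(\theta_{a}(x))\,p_{a}(x)$, hence
\[
\int_{-\pi}^{\pi}|t_{n,a}'(x)|^{p}\,dx=\int_{-\pi}^{\pi}|F'(\theta_{a}(x))|^{p}\,p_{a}(x)^{p-1}\,p_{a}(x)\,dx .
\]
I would bound the $p-1$ ``excess'' copies of $p_{a}$ by the upper estimate in \eqref{1:2}, keep one copy $p_{a}=\theta_{a}'$ as the Jacobian of the substitution $u=\theta_{a}(x)$, and use \eqref{1:1} together with the $2\pi$-periodicity of $|F'|^{p}$ to reach $\int_{-\pi}^{\pi}|t_{n,a}'(x)|^{p}\,dx\leq\left(\frac{1+|a|}{1-|a|}\right)^{p-1}\int_{-\pi}^{\pi}|F'(u)|^{p}\,du$.

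Next I would invoke \eqref{4:1} for the genuine polynomial $F$, namely $\int_{-\pi}^{\pi}|F'(u)|^{p}\,du\leq(cn)^{p}\int_{-\pi}^{\pi}|F(u)|^{p}\,du$, and then return to $t_{n,a}$ by Lemma 2.2 in the form $\|F\|_{p}=\|t_{n,a}\circ\theta_{a}^{-1}\|_{p}\leq\left(\frac{1+|a|}{1-|a|}\right)^{1/p}\|t_{n,a}\|_{p}$, that is $\int_{-\pi}^{\pi}|F(u)|^{p}\,du\leq\frac{1+|a|}{1-|a|}\int_{-\pi}^{\pi}|t_{n,a}(x)|^{p}\,dx$. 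Chaining the three estimates already yields an inequality of the claimed type with a completely explicit constant of the form $(cn)^{p}$ times a power of $\frac{1+|a|}{1-|a|}$. To land on precisely the exponent $2(1+1/p)$ appearing in the statement it is more natural to argue as in the previous section: start from the de la Vall\'ee Poussin-type identity $t_{n,a}=2\sigma_{2n+1}^{a}(t_{n,a})-\sigma_{n}^{a}(t_{n,a})$ (which holds because $S_{k}^{a}(t_{n,a})=t_{n,a}$ for $k\geq n$), differentiate the kernel representation \eqref{3:2} and integrate by parts to trade $K_{m}$ for $K_{m}'$ (with $\|K_{m}'\|_{1}=O(m)$), and then run Minkowski's inequality exactly as in the proof of Theorem 3.3. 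Here $p_{a}(x)$ leaves the $x$-derivative only once --- contributing one integer power of $\frac{1+|a|}{1-|a|}$ --- while the comparison step reproduces the factor $\left(\frac{1+|a|}{1-|a|}\right)^{2/p}$ of Theorem 3.3, which is where the fractional part of the exponent comes from.

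The routine ingredients --- Minkowski's integral inequality, the translation invariance of Lebesgue measure, and the $2\pi$-periodicity bookkeeping coming from \eqref{1:1} --- carry over verbatim from the proofs of Theorems 2.3 and 3.3, and the genuinely analytic input is entirely contained in the classical inequality \eqref{4:1} (equivalently, in $\|K_{m}'\|_{1}=O(m)$). The one point I expect to need real care, i.e.\ the main obstacle, is the handling of the weight $p_{a}(x)^{p}$ that the chain rule inserts: it has to be split so that exactly one factor acts as the change-of-variables Jacobian for $u=\theta_{a}(x)$ while the remaining $p-1$ factors are absorbed into the constant via \eqref{1:2}, and one must check that the substitution is reversible --- this is the symmetric half of the argument in the proof of Lemma 2.2, using the lower bound in \eqref{1:2}. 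Keeping this step efficient (not pulling more copies of $p_{a}$ out of the integral than necessary, and using the sharp comparison constants) is exactly what controls the final exponent of $\frac{1+|a|}{1-|a|}$.
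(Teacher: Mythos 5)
Your first argument (straightening the phase) is correct and is genuinely different from the paper's proof --- and simpler. Writing $t_{n,a}=F\circ\theta_{a}$ with $F$ an ordinary trigonometric polynomial of degree $\leq n$, the chain rule, the two-sided bound \eqref{1:2} on $p_{a}=\theta_{a}'$, the substitution $u=\theta_{a}(x)$ together with \eqref{1:1}, and the classical inequality \eqref{4:1} give
$\int_{-\pi}^{\pi}|t_{n,a}'|^{p}\,dx\leq n^{p}\left(\frac{1+|a|}{1-|a|}\right)^{p}\int_{-\pi}^{\pi}|t_{n,a}|^{p}\,dx$,
i.e. $\|t_{n,a}'\|_{p}\leq n\,\frac{1+|a|}{1-|a|}\,\|t_{n,a}\|_{p}$, and this works for every $1\leq p\leq\infty$, not only $1<p<\infty$. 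The paper instead differentiates the Dirichlet-kernel reproducing formula \eqref{2:1}, converts $D_{n}'$ into $2n\sin(ny)K_{n-1}(y)$ by the classical Riesz--Fej\'er identity, gets the pointwise bound $|t_{n,a}'(x)|\leq\frac{1+|a|}{1-|a|}\,2n\,\sigma_{n-1}^{a}\bigl(|t_{n,a}|\bigr)(x)$ (the paper drops the needed absolute values), and then invokes Theorem 3.3. Your second route --- the identity $t=2\sigma_{2n+1}^{a}(t)-\sigma_{n}^{a}(t)$ for $t\in\tau_{n}^{a}$, integration by parts onto $K_{m}'$ with $\|K_{m}'\|_{1}=O(m)$, then the Minkowski argument of Theorem 3.3 --- is essentially the paper's scheme in different packaging; both versions yield $\|t_{n,a}'\|_{p}\leq Cn\left(\frac{1+|a|}{1-|a|}\right)^{1+2/p}\|t_{n,a}\|_{p}$. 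What your first route buys is brevity and a better distortion exponent in norm form (exponent $1$ instead of $1+2/p$); what the kernel route buys is that it stays inside the machinery of Section 3 and avoids appealing to \eqref{4:1} for general $p$ (only the elementary $L^{1}$ bound on $K_{m}'$ is needed).

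One bookkeeping caveat, which is a defect of the statement rather than of your argument: neither of your routes, nor the paper's own proof, lands on ``precisely the exponent $2(1+1/p)$''. As displayed, the theorem omits the factor $n^{p}$ entirely (test $t_{n,a}=e^{in\theta_{a}}$ to see that no $n$-free constant can work), and the paper's proof actually terminates with $\|t_{n,a}'\|_{p}\leq cn\left(\frac{1+|a|}{1-|a|}\right)^{2+1/p}\|t_{n,a}\|_{p}$ --- itself an arithmetic slip for $1+2/p$ --- whose $p$-th power matches neither the stated exponent nor the absent $n^{p}$. So you should drop the promise that the second route recovers $2(1+1/p)$ exactly: what it recovers is the (stronger, in the $a$-dependence) exponent $1+2/p$ in norm form, and your change-of-variables argument already gives a perfectly valid, in fact cleaner, Bernstein inequality; the discrepancy with the printed exponent originates in the paper, not in your proof.
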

\begin{proof}
It is well known from the \eqref{2:1} equality that
\begin{equation*}
\frac{1}{\pi}\int_{-\pi}^{\pi}\mid t_{n,a}(u)\mid^{p} D_{n}(\theta_{a}(u)-\theta_{a}(x))p_{a}(u) du
\end{equation*}
where
\begin{equation*}
D_{n}(u)=\frac{1}{2}+ \sum_{k=1}^{n} \cos ku
\end{equation*}
is the Dirichlet's kernel of order $n$. Let $T= t_{n,a}\circ \theta_{a}^{-1}$. By the derivation, we obtain
\begin{equation*}
t'_{n,a}(x)=\frac{1}{\pi}\int_{-\pi}^{\pi}\mid t_{n,a}(u)\mid^{p} D'_{n}(\theta_{a}(u)-\theta_{a}(x))(-p_{a}(x))p_{a}(u) du
\end{equation*}
\begin{equation*}
=p_{a}(x)\frac{1}{\pi}\int_{-\pi}^{\pi} T_{n,a}(\theta_{a}(u)) D'_{n}(\theta_{a}(u)-\theta_{a}(x))p_{a}(u) du
\end{equation*}
\begin{equation*}
=p_{a}(x)\frac{1}{\pi}\int_{-\pi}^{\pi} T_{n,a}(y+\theta_{a}(x)) D'_{n}(y)dy
\end{equation*}
\begin{equation*}
=p_{a}(x)\frac{1}{\pi}\int_{-\pi}^{\pi} T_{n,a}(\theta_{a}(x)+y)\left( \sum_{k=1}^{n}  k \sin ky  \right)dy
\end{equation*}
\begin{equation*}
=p_{a}(x)\frac{1}{\pi}\int_{-\pi}^{\pi} T_{n,a}(\theta_{a}(x)+y) \left( \sum_{k=1}^{n}  (k \sin ky)+ \sum_{k=1}^{n-1} (k \sin (2n-k)y) \right)dy
\end{equation*}
\begin{equation*}
=p_{a}(x)\frac{1}{\pi}\int_{-\pi}^{\pi} T_{n,a}(\theta_{a}(x)+y) 2n \sin ny \left( \frac{1}{2}+ \sum_{k=1}^{n-1} \left(1-\frac{k}{n} \cos ky \right) \right)dy
\end{equation*}
\begin{equation*}
=p_{a}(x)\frac{1}{\pi}\int_{-\pi}^{\pi} T_{n,a}(\theta_{a}(x)+y) 2n \sin ny K_{n-1}(y)dy
\end{equation*}
where $K_{n-1}$ is the Fej\'{e}r's kernel of order $n-1$. By taking the absolute values, we get
\begin{equation*}
\mid T'_{n,a} \mid \leq  \left( \frac{1+\mid a \mid}{1-\mid a \mid}\right)\mid 2n \sin ny  \mid \frac{1}{\pi}\int_{-\pi}^{\pi} T_{n,a}(\theta_{a}(x)+y) K_{n-1}(y)dy
\end{equation*}
\begin{equation*}
\leq  \left( \frac{1+\mid a \mid}{ 1-\mid a \mid}\right) 2n \frac{1}{\pi}\int_{-\pi}^{\pi} T_{n,a}(\theta_{a}(x)+y) K_{n-1}(y)dy
\end{equation*}
\begin{equation*}
\leq  \left( \frac{1+\mid a \mid}{ 1-\mid a \mid}\right) 2n  \sigma^{a}_{n-1}(t_{n,a},x).
\end{equation*}
If we use Theorem 3.3, we get that
\begin{equation*}
\frac{1}{2 \pi}\left( \int_{-\pi}^{\pi}\mid t'_{a}(x)\mid^{p} dx \right)^{1/p} \leq \frac{1}{2 \pi}\left( \int_{-\pi}^{\pi}\mid \left( \frac{1+\mid a \mid}{ 1-\mid a \mid}\right) 2n  \sigma^{a}_{n-1}(t_{n,a},x)\mid^{p} dx \right)^{1/p}
\end{equation*}

\begin{equation*}
\leq \left( \frac{1+\mid a \mid}{ 1-\mid a \mid} \right)  2n  \left(\frac{1}{2 \pi} \int_{-\pi}^{\pi}\mid\sigma^{a}_{n-1}(t_{n,a}, x) \mid^{p} dx \right)^{1/p}
\end{equation*}

\begin{equation*}
\leq \left( \frac{1+\mid a \mid}{ 1-\mid a \mid}\right)  2n \sigma^{a}_{n-1}(t_{n,a}, x)\parallel_{p}
\end{equation*}
\begin{equation*}
\leq \left( \frac{1+\mid a \mid}{ 1-\mid a \mid}\right)  2n \left( \frac{1+\mid a \mid}{ 1-\mid a \mid}\right)^{2/p} \parallel t_{n,a} \parallel_{p}
\end{equation*}

\begin{equation*}
\leq   2n \left( \frac{1+\mid a \mid}{ 1-\mid a \mid}\right)^{2+1/p} \parallel t_{n,a} \parallel_{p}
\end{equation*}
From this we obtain
\begin{equation*}
\parallel t'_{n,a} \parallel_{p}\leq   c n \left( \frac{1+\mid a \mid}{ 1-\mid a \mid}\right)^{2+1/p} \parallel t_{a} \parallel_{p}
\end{equation*}
and this completes the proof.
\end{proof}

\end{document}